\def\XXint#1#2#3{{\setbox0=\hbox{$#1{#2#3}{\int}$}
  \vcenter{\hbox{$#2#3$}}\kern-.5\wd0}}
\newcommand{\lda}{\lambda}
\newcommand{\om}{\Omega}            
\newcommand{\pa}{\partial}
\newcommand{\va}{\varepsilon}       
\newcommand{\ud}{\mathrm{d}}
\newcommand{\be}{\begin{equation}} 
\newcommand{\ee}{\end{equation}}
\newcommand{\w}{\omega}      
\newcommand{\Lda}{\Lambda}    
\newcommand{\A}{\mathbf{A}}
\newcommand{\cA}{\mathcal{A}}
\newcommand{\B}{\mathbf{B}}
\newcommand{\CC}{\mathbf{C}}
\newcommand{\cD}{\mathcal{D}}
\newcommand{\I}{\mathbf{I}}
\newcommand{\cL}{\mathcal{L}} 
\newcommand{\Z}{\mathbb{Z}}
\newcommand{\MM}{\mathbb{M}}
\newcommand{\m}{\mathbf{m}}
\newcommand{\cN}{\mathcal{N}}
\newcommand{\n}{\mathbf{n}}
\newcommand{\PP}{\mathbf{P}}
\newcommand{\Q}{\mathbf{Q}}  
\newcommand{\R}{\mathbb{R}}
\newcommand{\Ss}{\mathbb{S}}
\newcommand{\uu}{\mathbf{u}}
\newcommand{\T}{\mathrm{T}}
\newcommand{\sg}{\sigma} 
\newcommand{\ift}{\infty} 
\newcommand{\wt}{\widetilde}
\newcommand{\f}{\frac}
\newcommand{\ol}{\overline}
\newcommand{\op}{\operatorname}
\newcommand{\na}{\nabla}
\DeclareMathOperator{\dist}{dist}
\DeclareMathOperator{\Bad}{Bad}
\DeclareMathOperator{\diam}{diam}
\DeclareMathOperator{\supp}{supp}
\DeclareMathOperator{\tr}{tr}
\DeclareMathOperator{\sing}{sing}
\DeclareMathOperator{\loc}{loc}
\def\<{\langle}\def\>{\rangle}
\def\({\left(}\def\){\right)}
\def\[{\left[}\def\]{\right]}
\numberwithin{equation}{section}
\theoremstyle{plain}
\newtheorem{thm}{Theorem}[section]
\newtheorem{cor}[thm]{Corollary}
\newtheorem{lem}[thm]{Lemma}
\newtheorem{prop}[thm]{Proposition}
\theoremstyle{definition}
\newtheorem{defn}[thm]{Definition}
\theoremstyle{remark}
\newtheorem{rem}[thm]{Remark}
\title[Improved convergence of Landau-de Gennes minimizers]{Improved convergence of Landau-de Gennes minimizers in the vanishing elasticity limit}
\author{Haotong Fu}
\address{School of Mathematical Sciences, Peking University, Beijing 100871, China}
\email{547434974@qq.com}
\author{Huaijie Wang}
\address{School of Mathematical Sciences, Peking University, Beijing 100871, China}
\email{huaijie\_wang@163.com}
\author{Wei Wang}
\address{School of Mathematical Sciences, Peking University, Beijing 100871, China}
\email{gjmtamag@gmail.com,\,\,2201110024@stu.pku.edu.cn}
\date{}
\begin{document}

\begin{abstract}
We investigate the vanishing elasticity limit for minimizers of the Landau-de Gennes model with finite energy. By adopting a refined blow-up and covering analysis, we establish the optimal $ L^p $ ($ 1<p<+\ift $) convergence of minimizers and achieve the sharp $ L^1 $ convergence rate of the bulk energy term.
\end{abstract}
\maketitle

\section{Introduction}
\subsection{Landau-de Gennes model}	
Nematic liquid crystals are characterized by local orientational order among their constituent molecules. While the molecules move freely, as in an isotropic fluid, they tend to align along locally preferred directions. Several continuum theories have been proposed to model this orientational behavior, each characterized by a distinct choice of order parameter (see \cite{DG93, Eri91, FS08, Fra58}). Among these, the Landau–de Gennes theory \cite{DG93} is the most comprehensive and widely accepted framework for describing the behavior of nematic liquid crystals. The theory employs the so-called $\Q$-tensor---a $3\times 3$ symmetric, traceless matrix---as the order parameter, with the analysis conducted in the five-dimensional space $\mathbb S_0$ of such tensors:
$$
\Ss_0:=\{\Q\in\mathbb{M}^{3\times 3}:\Q^{\T}=\Q,\,\,\tr\Q=0\}.
$$
The central object in the Landau-de Gennes theory is the free energy functional $F(\Q, \om)$. Stable equilibrium configurations of the liquid crystalline system with a domain $\om\subset\mathbb R^n$ ($n=2, 3$) correspond to local minimizers of this functional. The simplest form of the Landau-de Gennes energy is given by
$$
F(\Q,\om):= \int_\om(f_e(\Q)+f_b(\Q))\ud x.
$$
The elastic energy density $ f_e(\Q)$ characterizes the inhomogeneity of the alignment of the liquid crystal molecules. For elastic constants $L_i$ ($i=1,2,3$) depending on the material, we express $ f_e(\Q) $ as
$$
f_e(\Q)=\f{L_1}{2}|\nabla\Q|^2+\f{L_2}{2}\pa_j\Q_{ij}\pa_k\Q_{ik}+\f{L_3}{2}\pa_k\Q_{ij}\pa_j\Q_{ik}.
$$
The simplest form of the bulk potential density $f_b(\Q)$ takes the following quartic polynomial
$$
f_b(\Q)=-\f{a}{2}\tr{\Q}^2-\f{b}{3}\tr\Q^3+\f{c}{4}(\tr\Q^2)^2,
$$
where $a\ge 0, b,c >0$ are material constants.

Let $\Q\in\mathbb S_0$ and $\lambda_1,\lambda_2,\lambda_3$ be the eigenvalues of $\Q$. Then, we have
$$
\Q=\lambda_1\mathbf n_1\otimes\mathbf n_1+\lambda_2\mathbf n_2\otimes\mathbf n_2+\lambda_3\mathbf n_3\otimes\mathbf n_3,
$$
where $\mathbf n_1,\mathbf n_2,\mathbf n_3\in \mathbb S^2$ with $\mathbf n_i\cdot\mathbf n_j=0\,(1\le i,j\le 3)$. Since $Q_{ii}=0$, we can rewrite $\Q$ as
$$
\Q=s\left(\mathbf n\otimes\mathbf n-\f{1}{3}\mathbf I\right)+r\left(\mathbf m\otimes\mathbf m-\f{1}{3}\mathbf I\right),
$$
where $s,r\in \mathbb R$ and $\mathbf n,\mathbf m$ are eigenvectors of $\Q$ with $\mathbf n\cdot\mathbf m=0$. One can classify $\Q$ into three phases: When $s=r=0$, i.e., $\Q=0$, then $\Q$ is said to be isotropic. When $s$ and $r$ are different and non-zero, it is said to be biaxial. When $s=r\neq0$ or $s=0,r\neq0$ or $s\neq0,r=0$, it is said to be uniaxial and $\Q$ can be rewritten as
$$
\Q=t\left(\uu\otimes\uu-\f{1}{3}\mathbf I\right), \quad \uu\in\mathbb S^2.
$$

\subsection{Vanishing elasticity limit}
Let $L_2=L_3=0$ for simplicity. Since the elastic constants are so small compared to the bulk constants, we consider the rescaled energy functional with a small parameter $\va>0$. Let $ \om\subset\R^3 $ be a bounded domain. We consider the Landau-de Gennes energy 
\be
E_{\va}(\Q,\om):=\int_{\om}e_{\va}(\Q)\ud x,\quad e_{\va}(\Q):=\f{1}{2}|\na\Q|^2+\f{1}{\va^2}f(\Q),\label{LdG}\tag{LdG}
\ee
where
$$
f(\Q)=k-\f{a}{2}\tr\Q^2-\f{b}{3}\tr\Q^3+\f{c}{4}(\tr \Q^2)^2,\quad\Q\in\Ss_0.
$$
Here, $ k $ is an additive constant such that $ \inf_{\Q\in\Ss_0}f(\Q)=0 $. 

The vacuum manifold is
$$
\mathcal{N}:=\left\{s_*\(\n\otimes\n-\f{1}{3}\I\):\n\in\mathbb{S}^2\right\}=f^{-1}(0),
$$
where
$$
s_*:=s_*(a,b,c)=\f{b+\sqrt{b^2+24ac}}{4c}.
$$
Indeed, let $\va\to 0^+$, the term $\f{1}{\va^2}f(\Q)$ in \eqref{LdG} forces the minimizers to take the value in the vacuum manifold. The limiting energy functional is given by
\be
E(\Q,\om):=\int_{\om}|\na\Q|^2\ud x,\quad\Q\in H^1(\om,\cN).\label{Dirichletenergy}\tag{Dir}
\ee

In \cite{MZ10}, Majumdar and Zarnescu investigated the asymptotic behavior of minimizers of the Landau–de Gennes energy \eqref{LdG} as the elastic parameter $\varepsilon>0$ tends to zero. They showed that the minimizer $\Q_\va$ converges in $H^1$ to a minimizer of the limiting Dirichlet energy \eqref{Dirichletenergy}. Moreover,  outside a finite set of point singularities of the limiting map, the convergence holds uniformly on compact sets. In a subsequent work \cite{NZ13}, Nguyen and Zarnescu improved the uniform convergence result to $C^j$ convergence with $j\in \mathbb Z_+$. More recently, Geng and Zarnescu \cite{GZ23} established a refined convergence result, demonstrating that the minimizer of \eqref{LdG} can be well approximated outside an $O(\varepsilon)$-neighborhood of the point defect. The results in \cite{MZ10, NZ13} were obtained under the assumption of uniformly bounded energy, namely,
$$
E_\varepsilon\left(\Q_\varepsilon,\om\right)\le C
$$
for some $C>0$ independent of $\varepsilon$. This assumption was relaxed in \cite{Can17}, where Canevari extended the analysis to the case
$$
E_\varepsilon(\Q_\varepsilon,\om)\leq C\left(|\log\va|+1\right),
$$
allowing for more complex singular sets comprising both point and line defects, see \cite{WZ24} for the problem with the biaxial setting. Researchers have extensively studied the vanishing elasticity limit for \eqref{LdG}; see, for instance, \cite{DMP21} for torus-like defect structure, \cite{CL22, FH22} for models with non-zero $L_2$ and $L_3$, and \cite{HL22, WWZ17} for analyses in the gradient flow setting.

\subsection{Main results}
We further enhance the convergence results of \cite{MZ10,NZ13} by establishing an $L^{3,\infty}$ estimate for the gradient, which in turn guarantees $L^p$-convergence of the minimizers. The $C^j$-convergence in \cite[Theorem 2]{NZ13} relies on the smoothness of the limiting map, whereas our result accommodates singularities in the limit. We begin by introducing the definition of local minimizers.
\begin{defn}[Local minimizer]
Let $ \va\in(0,1) $. $ \Q_{\va}\in H_{\loc}^1(\om,\Ss_0) $ is a local minimizer of \eqref{LdG} if for any $ B_r(x)\subset\subset\om $ and $ \PP\in H^1(B_r(x),\Ss_0) $ with $ \PP=\Q_{\va} $ on $ \pa B_r(x) $ in the sense of trace,
$$
E_{\va}(\Q_{\va},B_r(x))\leq E_{\va}(\PP,B_r(x)).
$$
With parallel statements, we also define the local minimizer of \eqref{Dirichletenergy}.
\end{defn}

Our main theorem is as follows.

\begin{thm}\label{main}
Let $ M>0 $. Suppose that $ \{\Q_{\va}\}_{\va\in(0,1)}\subset H_{\loc}^1(\om,\Ss_0) $ are local minimizers of \eqref{LdG} such that
\be
\sup_{\va\in(0,1)}\(E_{\va}(\Q_{\va},\om)+\|\Q_{\va}\|_{L^{\ift}(\om)}\)\leq M.\label{conditionboundary}
\ee
Then, there is $ \Q_0\in H^1(\om,\cN) $ a local minimizer of \eqref{Dirichletenergy} such that for some $ \va_i\to 0^+ $, $ \Q_{\va_i}\to\Q_0 $ strongly in $ H_{\loc}^1(\om,\Ss_0) $, and the following properties hold.
\begin{enumerate}[label=$(\theenumi)$]
\item For any $ p\in(1,+\ift) $ and $ K\subset\subset\om $,
\be
\lim_{i\to+\ift}\|\Q_{\va_i}-\Q_0\|_{L^p(K)}=0.\label{Lpconvergence}
\ee
\item For any $ K\subset\subset\om $,
\be
\int_{K}\f{1}{\va_i^2}f(\Q_{\va_i})\ud x\leq C\va_i,\label{fQconvergence}
\ee
where $ C>0 $ depends only on $ a,b,c,K $, and $ M $. 
\end{enumerate}
\end{thm}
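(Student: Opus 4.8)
The plan is to combine a compactness step, essentially contained in \cite{MZ10,NZ13}, with a quantitative blow-up and covering analysis around the finitely many point defects of the limit map; the two new ingredients are a $\va$-uniform pointwise gradient bound and a sharpened pointwise bound on $f(\Q_\va)$, from which the $L^p$-convergence and the rate \eqref{fQconvergence} follow by a covering argument.

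\textbf{Step 1: compactness and the limiting map.}
By \eqref{conditionboundary} the family $\{\Q_\va\}$ is bounded in $H^1_{\loc}(\om,\Ss_0)$, so along some $\va_i\to0^+$ one has $\Q_{\va_i}\wc\Q_0$ weakly in $H^1_{\loc}$, strongly in $L^2_{\loc}$, and a.e. Fatou's lemma together with $\int_\om\va_i^{-2}f(\Q_{\va_i})\le M$ forces $f(\Q_0)=0$ a.e., i.e. $\Q_0\in H^1(\om,\cN)$. Comparing the local minimality of $\Q_{\va_i}$ on balls with $\cN$-valued competitors that coincide with $\Q_{\va_i}$ on the boundary (via a thin boundary layer, possible since $\Q_{\va_i}\to\Q_0$ in $H^{1/2}$ on a.e. sphere) and equal $\Q_0$ in the interior, together with weak lower semicontinuity, shows that $\Q_0$ is a local minimizer of \eqref{Dirichletenergy} and that $\Q_{\va_i}\to\Q_0$ strongly in $H^1_{\loc}$, as in \cite{MZ10}. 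Since $\cN\cong\R\mathrm{P}^2$, $\Q_0$ is a minimizing harmonic map, hence (Schoen--Uhlenbeck) smooth off a locally finite set $\Sigma$ of point defects, with $|\na^\ell\Q_0(x)|\le C_\ell\,\dist(x,\Sigma)^{-\ell}$ locally; fix $K\subset\subset K'\subset\subset\om$ and write $\Sigma\cap\ol{K'}=\{a_1,\dots,a_m\}$. By \cite{NZ13}, $\Q_{\va_i}\to\Q_0$ in $C^j_{\loc}(\om\setminus\Sigma)$ for every $j$.

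\textbf{Step 2: universal bounds, $\eta$-regularity, and the sharp interior estimates.}
From the Euler--Lagrange system $\Delta\Q_\va=\va^{-2}\bigl(\na f(\Q_\va)\bigr)^\perp$ and $\|\Q_\va\|_{L^\ift}\le M$, a rescaled interior gradient estimate at scale $\va$ gives the universal bounds $\|\na\Q_\va\|_{L^\ift(\om'')}\le C\va^{-1}$ and $f(\Q_\va)\le C$ on $\om''\subset\subset\om$. The monotonicity formula and the standard $\eta$-compactness for \eqref{LdG} furnish $\eta_0>0$ such that $\rho^{2-n}E_\va(\Q_\va,B_\rho(x))\le\eta_0$ with $\va\le\rho$ imply $\|\na\Q_\va\|_{L^\ift(B_{\rho/2}(x))}\le C\rho^{-1}$, $\|\dist(\Q_\va,\cN)\|_{L^\ift(B_{\rho/2}(x))}\le C\va\rho^{-1}$, and energy decay on smaller balls. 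I would then run a blow-up and covering argument: rescaling about any $x$ with $r:=\dist(x,\Sigma)\in[C_*\va,r_0]$ by the factor $r$ (so the rescaled minimizers have parameter $\va/r\le C_*^{-1}$), and using the smoothness of $\Q_0$ at the corresponding unit-scale regular point together with the strong $H^1$ convergence of Step~1, one gets that for a fixed small $\kappa=\kappa(\eta_0)$ and all $\va$ small, $(\kappa r)^{2-n}E_\va(\Q_\va,B_{\kappa r}(x))<\eta_0$; applying $\eta$-compactness at scale $\kappa r$ and its energy-decay consequence yields $|\na\Q_\va(x)|\le Cr^{-1}$, the powers of $\kappa$ cancelling. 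To sharpen the bulk estimate I would use that $\cN$ is a nondegenerate minimum manifold of $f$: writing $\phi:=\Q_\va-\Pi_\cN(\Q_\va)$ for the normal part, so $|\phi|\simeq\dist(\Q_\va,\cN)$ and $f(\Q_\va)\simeq|\phi|^2$, and projecting the equation onto the normal bundle produces a linear equation
\be\label{eq:normal}
\va^2\Delta\phi-\mu\,\phi=\va^2 g,\qquad \mu=\mu(a,b,c)>0,\qquad |g|\le C|\na\Q_\va|^2 ,
\ee
with $g$ coming from the second fundamental form of $\cN$ (up to terms quadratic in $\dist(\Q_\va,\cN)$, absorbed via the crude bound $\dist(\Q_\va,\cN)\le C\va\,\dist(x,\Sigma)^{-1}$). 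Comparison for the operator $\va^2\Delta-\mu$ on $B_{\kappa r/2}(x)$ — the particular solution absorbing the $O(r^{-2})$ forcing contributes $O(\va^2 r^{-2})$, while the homogeneous part carrying the crude boundary data $|\phi|\le C\va r^{-1}$ on $\pa B_{\kappa r/2}(x)$ is exponentially small at the centre because $r/\va$ is large — gives $\dist(\Q_\va(x),\cN)\le C\va^2 r^{-2}$, hence $f(\Q_\va(x))\le C\va^4 r^{-4}$. Combining this with the universal bounds on $\{\dist(x,\Sigma)\le C_*\va\}$ yields, for all $x\in K$ and $\va$ small,
\be\label{eq:pw}
|\na\Q_\va(x)|\le\f{C}{\dist(x,\Sigma)+\va},\qquad f(\Q_\va(x))\le\f{C\va^4}{\bigl(\dist(x,\Sigma)+\va\bigr)^4}.
\ee

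\textbf{Step 3: covering and conclusions.}
Integrating the second bound in \eqref{eq:pw} over $K$ and using $\int_{B_{r_0}(a_j)}(|x-a_j|+\va)^{-4}\,\ud x\le C\va^{-1}$,
\be\label{eq:bulk}
\int_K\f{1}{\va^2}f(\Q_\va)\le C\va^2\sum_{j=1}^m\int_{B_{r_0}(a_j)}\f{\ud x}{(|x-a_j|+\va)^4}+C\va^2|K|\le C\va ,
\ee
which is \eqref{fQconvergence}. From the first bound in \eqref{eq:pw}, $\{x\in K:|\na\Q_\va(x)|>\lda\}\subset\bigcup_{j=1}^m B_{C/\lda}(a_j)$ for every $\lda>0$, so $\bigl|\{x\in K:|\na\Q_\va(x)|>\lda\}\bigr|\le Cm\lda^{-3}$; that is, $\na\Q_\va$, and likewise $\na(\Q_\va-\Q_0)$ since $|\na\Q_0|\le C\dist(\cdot,\Sigma)^{-1}$, is bounded in $L^{3,\ift}(K)$ uniformly in $\va$. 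Finally, \eqref{Lpconvergence} follows from the strong $L^2_{\loc}$ convergence of Step~1: for $p\ge2$ interpolate with the uniform bound $\|\Q_\va-\Q_0\|_{L^\ift(K)}\le M+\|\Q_0\|_{L^\ift}$, and for $1<p<2$ use $L^2(K)\hookrightarrow L^p(K)$; moreover, combining the a.e.\ convergence $\na\Q_{\va_i}\to\na\Q_0$ off $\Sigma$ with the $\va$-uniform domination $|\na\Q_{\va_i}|\le C\dist(\cdot,\Sigma)^{-1}\in L^q_{\loc}$ ($q<3$) gives $\na\Q_{\va_i}\to\na\Q_0$ in $L^q_{\loc}$ for every $q\in[1,3)$.

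\textbf{The main obstacle.}
The crux is the sharp interior bulk bound $f(\Q_\va)\lesssim\va^4\dist(x,\Sigma)^{-4}$ on the good region: the classical $\eta$-regularity gives only $f(\Q_\va)\lesssim\va^2\dist(x,\Sigma)^{-2}$, which is \emph{not} summable to $O(\va)$ in \eqref{eq:bulk} (it would only yield $O(1)$). Extracting the extra factor $\va^2\dist(x,\Sigma)^{-2}$ rests on the linearized normal equation \eqref{eq:normal}, on the preliminary gradient bound $|\na\Q_\va|\lesssim\dist(x,\Sigma)^{-1}$ — whose proof itself hinges on the $\kappa$-rescaling that makes the scaled energy small on balls of radius a small fixed fraction of $\dist(x,\Sigma)$, uniformly down to the scale $\dist(x,\Sigma)\sim\va$ — and on a careful bootstrap absorbing the quadratic terms in $\dist(\Q_\va,\cN)$ into the forcing, together with the $\va$-uniform exponential damping of the boundary contribution in the comparison and the matching with the universal bounds across $\dist(x,\Sigma)\sim\va$. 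By contrast, the covering in Step~3 costs nothing beyond $\#(\Sigma\cap\ol{K'})<\ift$, the decay in \eqref{eq:pw} being summable with $\va$-independent constants; thus essentially all the work lies in the quantitative elliptic analysis of Step~2.
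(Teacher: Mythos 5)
Your Step 1 is fine, and your treatment of \eqref{Lpconvergence} is correct (indeed, once one has the uniform $L^\ift$ bound and strong $L^2_{\loc}$ convergence, interpolation gives \eqref{Lpconvergence} directly; the paper instead routes this through a uniform $L^{3,\ift}$ bound on $\na\Q_{\va}$, which is the stronger piece of information). The genuine gap is in Step 2, at the point where everything else hinges: the claim that for every $x$ with $r:=\dist(x,\Sigma)\in[C_*\va,r_0]$ one has $(\kappa r)^{-1}E_{\va}(\Q_{\va},B_{\kappa r}(x))<\eta_0$, uniformly in $x$, $r$ and $\va$, ``by the smoothness of $\Q_0$ at the corresponding unit-scale regular point together with the strong $H^1$ convergence.'' Strong $H^1_{\loc}$ convergence controls normalized energies only on balls of a fixed scale; on balls of radius $\kappa r$ with $r\downarrow C_*\va\to 0$ centered at points approaching $\Sigma=\sing\Q_0$, the error $\int_{B_{\kappa r}(x)}|\na\Q_{\va_i}-\na\Q_0|^2$ is only $o(1)$, not $o(r)$, so it cannot be absorbed. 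If you try to repair this by a contradiction/blow-up at variable centers $x_i\to a\in\Sigma$ and scales $r_i=\dist(x_i,\Sigma)\to 0$, the rescaled maps converge (by the minimizer compactness) to an entire minimizing harmonic map which is \emph{not} a rescaling of $\Q_0$ and need not be smooth in $B_\kappa$: you must exclude energy concentration of $\Q_{\va_i}$ at intermediate scales near $a$ that is invisible in the limit map (e.g.\ a pair of nascent defects of $\Q_{\va_i}$ at mutual distance $\sim r_i$ collapsing onto the single singular point $a$), and upper semicontinuity of the density gives no contradiction there. This is precisely the difficulty the paper circumvents: its Proposition \ref{Fprop} (pinching of the modified monotone quantity forces the bad set into a small ball) and the covering Lemmas \ref{cover1}--\ref{cover2} bound the \emph{volume} of $\Bad(\Q_{\va};\eta r,\delta)$ at every scale $r\in(\Lda\va,1)$ without ever identifying the bad set with a neighborhood of $\sing\Q_0$, and that weaker information already suffices both for the $L^{3,\ift}$ gradient bound and, combined with the (rescaled) pointwise bulk estimate of \cite[Corollary 2]{NZ13} on dyadic annuli around the bad set, for the rate \eqref{fQconvergence}.

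Two further points. First, even if your pointwise bounds \eqref{eq:pw} were established, your constants in \eqref{eq:bulk} depend on the singular configuration of $\Q_0$ (the number $m$ of singular points in $K'$, their mutual separation $r_0$, and how large $i$ must be), i.e.\ on the particular subsequence and limit, whereas the theorem asserts $C=C(a,b,c,K,M)$; the paper's covering argument delivers exactly this uniformity (and, as remarked there, \eqref{fQconvergence} for all $\va$, not only along the subsequence). Second, your derivation of $f(\Q_{\va})\lesssim\va^4\dist(x,\Sigma)^{-4}$ via the linearized normal equation is in the spirit of \cite{NZ13} and is acceptable in principle once a uniform gradient/regular-scale bound is in hand; so the proof stands or falls with the unproved uniform-in-scale energy smallness near $\Sigma$, which as written is a missing key step rather than a routine verification.
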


Several remarks are in order.

\begin{rem}
The convergence rate \eqref{fQconvergence} is not only correct for a subsequence, but indeed, we prove that it holds for all $\Q_{\va}$. The convergence results in \eqref{Lpconvergence} and \eqref{fQconvergence} are already optimal. We refer to Section \ref{sharpsection} for more details on their sharpness.
\end{rem}

\begin{rem}
We believe that our method applies to other settings, such as generalized Ginzburg-Landau \cite{MRS21} and torus-like solutions of the Landau-de Gennes model \cite{DMP21}.
\end{rem}

\begin{rem}
The condition \eqref{conditionboundary} is satisfied if $ \om $ is regular enough and the $ \Q_{\va} $ is a global minimizer with nice boundary conditions. One can refer to \cite{MZ10} for example. Precisely, as a direct consequence, we have the following result under the setting of \cite{MZ10}.
\end{rem}

\begin{cor}\label{Maincor}
Let $ \om\subset\R^3 $ be a smooth bounded domain and $ \Q_b\in C^{\ift}(\pa\om,\cN) $. Assume that $ \{\Q_{\va}\}_{\va\in(0,1)} $ is a sequence of $($global$)$ minimizers of the boundary value problem
\be
\min\{E_{\va}(\Q,\om):\Q_{\va}\in H^1(\om,\Ss_0),\,\,\Q=\Q_b\text{ on }\pa\om\text{ in the sense of traces}\}.\label{globalminimizer}
\ee
Then there exists $ \Q_0\in H^1(\om,\cN) $, a $($global$)$ minimizer of
\be
\min\{E(\Q,\om):\Q\in H^1(\om,\cN),\,\,\Q=\Q_b\text{ on }\pa\om\text{ in the sense of traces}\},\label{globalminimizer2}
\ee
and a subsequence $ \va_i\to 0^+ $ such that $ \Q_{\va_i}\to\Q_0 $ strongly in $ H^1(\om,\Ss_0) $. Moreover,
\be
\lim_{i\to+\ift}\|\Q_{\va}-\Q_0\|_{L^p(\om)}=0\label{cor1}
\ee
for any $ p\in(1,+\ift) $, and
\be
\int_{\om}\f{1}{\va_i^2}f(\Q_{\va_i})\ud x\leq C\va_i.\label{cor2}
\ee
where $ C>0 $ depends only on $ a,b,c,\Q_b $, and $ \om $.
\end{cor}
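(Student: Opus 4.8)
The plan is to verify the hypotheses of Theorem~\ref{main}, apply it to obtain $\Q_0$ together with the interior conclusions, and then exploit the boundary datum $\Q_b$ to promote everything to all of $\om$. First, for \eqref{conditionboundary}: since $\pa\om$ is smooth and $\Q_b\in C^\ift(\pa\om,\cN)$, there exists an extension $\Q^\ast\in H^1(\om,\cN)$ with trace $\Q_b$ on $\pa\om$ --- in three dimensions any smooth $\cN$-valued boundary map admits an $\cN$-valued $H^1$ extension, the topological obstruction being absorbed by finitely many point singularities, each of finite Dirichlet energy. Testing the minimality of $\Q_\va$ in \eqref{globalminimizer} against $\Q^\ast$ and using $f(\Q^\ast)=0$ a.e.\ gives $E_\va(\Q_\va,\om)\le E_\va(\Q^\ast,\om)=\tfrac12\int_\om|\na\Q^\ast|^2$, a bound independent of $\va$; the uniform $L^\ift$ bound on $\Q_\va$ is the maximum-principle estimate of Majumdar--Zarnescu \cite{MZ10}, valid since $\Q_\va$ solves the Euler--Lagrange system and $\Q_b$ takes values in the bounded manifold $\cN$. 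Thus \eqref{conditionboundary} holds for a suitable $M=M(a,b,c,\Q_b,\om)$. A global minimizer of \eqref{globalminimizer} is a local minimizer of \eqref{LdG} (glue any local competitor on $B_r(x)$ with $\Q_\va$ outside to get an admissible global competitor), so Theorem~\ref{main} applies and produces $\Q_0\in H^1(\om,\cN)$, a subsequence $\va_i\to0^+$, the strong $H^1_\loc$ convergence $\Q_{\va_i}\to\Q_0$, the interior $L^p_\loc$ convergence, and the interior rate $\int_K\f1{\va_i^2}f(\Q_{\va_i})\le C\va_i$ for $K\subset\subset\om$.

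Next I would identify $\Q_0$ and globalize the convergence. The uniform energy bound makes $\{\Q_{\va_i}\}$ bounded in $H^1(\om,\Ss_0)$, so the whole sequence converges weakly in $H^1(\om)$, with limit $\Q_0$ (consistency with the $H^1_\loc$ limit). Weak continuity of the trace map $H^1(\om)\to L^2(\pa\om)$ together with $\Q_{\va_i}\equiv\Q_b$ on $\pa\om$ gives $\Q_0=\Q_b$ on $\pa\om$, so $\Q_0$ is admissible in \eqref{globalminimizer2}. For any competitor $\PP\in H^1(\om,\cN)$ with $\PP=\Q_b$ on $\pa\om$, minimality of $\Q_{\va_i}$ and $f(\PP)=0$ yield $\tfrac12\int_\om|\na\Q_{\va_i}|^2\le E_{\va_i}(\Q_{\va_i},\om)\le E_{\va_i}(\PP,\om)=\tfrac12\int_\om|\na\PP|^2$; letting $i\to\ift$ and using weak lower semicontinuity gives $\int_\om|\na\Q_0|^2\le\int_\om|\na\PP|^2$, so $\Q_0$ is a global minimizer of \eqref{globalminimizer2}. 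Taking $\PP=\Q_0$ in the same chain forces $\int_\om|\na\Q_{\va_i}|^2\to\int_\om|\na\Q_0|^2$ and $\f1{\va_i^2}\int_\om f(\Q_{\va_i})\to0$; combined with the weak $H^1$ convergence and the Rellich embedding $H^1(\om)\hookrightarrow\hookrightarrow L^2(\om)$ this upgrades to $\Q_{\va_i}\to\Q_0$ strongly in $H^1(\om,\Ss_0)$. In particular $\Q_{\va_i}\to\Q_0$ in $L^2(\om)$, and interpolating with the uniform $L^\ift$ bound when $p\ge2$, together with $\|\cdot\|_{L^p(\om)}\le C\|\cdot\|_{L^2(\om)}$ when $1<p<2$, proves \eqref{cor1} for all $p\in(1,+\ift)$.

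Finally, for the global bulk-energy rate \eqref{cor2}, fix $K\subset\subset\om$ whose complement is a thin collar contained in $\{\dist(\cdot,\pa\om)<\delta\}$. On $K$, Theorem~\ref{main}(2) gives $\int_K\f1{\va_i^2}f(\Q_{\va_i})\le C\va_i$. The collar needs a boundary estimate: because $\Q_b$ is a smooth $\cN$-valued map, the limiting minimizer $\Q_0$ is regular (defect-free) in a fixed neighborhood of $\pa\om$ --- a Schoen--Uhlenbeck/Brezis--Coron--Lieb-type boundary regularity fact for minimizing harmonic maps --- and, running the blow-up and covering analysis of Theorem~\ref{main} at boundary points (where the rescaled limits are minimizing harmonic maps on a half-space with smooth $\cN$-valued boundary data, hence regular), one obtains that $\Q_{\va_i}$ stays within an $O(\va_i^2)$-tubular neighborhood of $\cN$ throughout the collar. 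Since $f$ vanishes quadratically on $\cN$, this yields $f(\Q_{\va_i})\le C\va_i^4$ pointwise there, hence $\int_{\om\setminus K}\f1{\va_i^2}f(\Q_{\va_i})\le C\va_i^2$, and adding the two contributions gives \eqref{cor2}. All the functional-analytic steps are soft; I expect this last boundary-layer estimate --- ruling out defects approaching $\pa\om$ and propagating the penalized-equation estimates uniformly up to the boundary --- to be the only genuine obstacle.
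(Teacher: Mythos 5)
Your skeleton coincides with the paper's: verify \eqref{conditionboundary} by comparing $\Q_\va$ with an $\cN$-valued $H^1$ extension of $\Q_b$ and invoking the maximum principle of \cite{MZ10}; note that global minimizers are local minimizers so Theorem \ref{main} applies in the interior; identify the limit as a global minimizer of \eqref{globalminimizer2} and upgrade to strong $H^1(\om)$ convergence by the standard energy-comparison/lower-semicontinuity argument (this is exactly \cite[Lemma 3]{MZ10}, which the paper simply cites); then treat a boundary collar separately. Your proof of \eqref{cor1} (strong $L^2(\om)$ convergence interpolated against the uniform $L^\ift$ bound, plus H\"older for $1<p<2$) is in fact softer than the paper's splitting into a thin collar and a compact interior piece, and it is correct.

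The genuine gap is the step you flag yourself: the collar estimate $f(\Q_{\va_i})\le C\va_i^4$ near $\pa\om$, which is the only content of the corollary not already contained in Theorem \ref{main}. You propose to obtain it by ``running the blow-up and covering analysis of Theorem \ref{main} at boundary points,'' but none of the machinery of Sections \ref{pre}--\ref{pf} (the weighted monotonicity formula of Proposition \ref{Monotone}, the small-energy regularity Lemmas \ref{partialregularity1}--\ref{partialregularity2}, Proposition \ref{Fprop}, the covering lemmas) is formulated for balls meeting $\pa\om$; building boundary analogues (monotonicity with the Dirichlet datum, half-ball $\eta$-regularity, boundary pinching) is a substantial development you do not carry out, and it is more than the statement requires, since one only needs control on a collar of \emph{fixed} width rather than quantitative estimates at all scales. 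The paper instead proves this as Lemma \ref{Auxlemma} by a soft compactness/contradiction argument: along any subsequence, \cite[Lemma 3]{MZ10} gives convergence to a minimizing harmonic map $\Q_0$, which by Schoen--Uhlenbeck boundary regularity \cite{SU83} is singularity-free in a collar $\om_{\delta_0}$; the contradiction set-up is what makes $\delta_0$ and the constants depend only on $a,b,c,\om,\Q_b$ and not on the particular limit $\Q_0$ (a point your sketch glosses over, but which is needed for the stated dependence of $C$ in \eqref{cor2}). Then the known uniform convergence results near regular points of the limit, \cite[Proposition 6]{MZ10} and \cite[Proposition 3]{NZ13}, give uniform gradient bounds and smallness of $\dist(\Q_\va,\cN)$ on the collar, and \cite[Corollary 2]{NZ13} converts this into your claimed $O(\va^2)$-closeness to $\cN$, i.e.\ $f(\Q_\va)\le C\va^4$ there. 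With that lemma in hand, your decomposition $\om=K\cup(\om\setminus K)$ and the bookkeeping $C\va_i+C\va_i^2$ is exactly the paper's conclusion; without it, the boundary layer remains an assertion rather than a proof.
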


\subsection{Difficulties and strategies}
Previous works (e.g.,\cite{FH22,MZ10,NZ13}) established refined convergence results under the assumption that the limiting configuration is regular. In contrast, our regime must address the singularity of the limit map $\Q_0$. A natural and powerful technique for handling such singularities---well studied in the context of harmonic maps---is blow-up analysis. Specifically, one zooms in at potential singular points and analyzes the properties of the rescaled limits to understand local behavior. 

To prove our main theorem, we refine the standard blow-up framework used in harmonic map analysis. Unlike the harmonic maps, the parameter $ \va\in(0,1) $ complicates the blow-up analysis. Intuitively, if $\Q_\va$ is a local minimizer of the Landau-de Gennes energy \eqref{LdG} on $\om$, then for any $r>0$ and $x\in\om$, the rescaled map $y\mapsto \Q_\va(x+ry)$ is a local minimizer of the same functional with parameter $\f{\va}{r}$. In order to recover the limiting minimizer of the Dirichlet energy \eqref{Dirichletenergy}, we mainly consider sequences satisfying $ \f{\va_i}{r_i}\to 0 $, while other regimes are treated by alternative arguments.

The key step in our proof is to control the neighborhoods of points where the sequence $\{\Q_\va\}$ exhibits irregular behavior, characterized by either
$$
|\na\Q_{\va}|\gg 1,\quad\text{or}\quad\dist(\Q_{\va},\cN)\gtrsim 1.
$$
Inspired by Naber and Valtorta’s work on approximate harmonic maps \cite{NV18}, we observe that within any ball $ B_r(x) $, these “bad points” must in fact lie inside a smaller ball $ B_{\beta r}(y) $, for some $ y\in B_{2r}(x) $ and sufficiently small scale $\beta$, and satisfy a pinching estimate of the form
$$
r^{2-n}\int_{B_r(x)}e_{\va}(\Q_{\va})-\(\f{r}{2}\)^{2-n}\int_{B_{\f{r}{2}}(x)}e_{\va}(\Q_{\va})\ll 1.
$$
Using an iterative covering argument based on this pinching property, we derive sharp estimates on the size of the “bad set”. Consequently, we obtain enhanced regularity of $\Q_\va$, leading to the desired $L^p$ convergence \eqref{Lpconvergence}. The convergence-rate eatimate in \eqref{fQconvergence} follows similarly to the approach in \cite{MSW19,WWW25}.

\subsection{Organization of this paper}
We organize the paper as follows. In Section \ref{pre}, we will present some primary ingredients in the proof, such as the modified monotonicity formula, the partial regularity, and characterization of the bad behavior of minimizers. In Section \ref{pf}, we introduce a key covering lemma and give the proof of our main theorem and the associated corollary. In Section \ref{sharpsection}, we present a concrete to illustrate the sharpness of Theorem \ref{main}.

\subsection{Notations and conventions} We use the following conventions in this paper.
\begin{itemize}
\item Throughout this paper, we denote positive constants by $ C $. To highlight dependence on parameters $ a_1,a_2,... $, we may write $ C(a_1,a_2,...) $, noting that its value may vary from line to line.
\item We will use the Einstein summation convention throughout this paper, summing the repeated index without the sum symbol.
\item Let $ \A,\B\in\MM^{3\times 3} $. Their inner product is $ \A: \B:=\A_{ij}\B_{ij} $.
\item For $ \n,\m\in\R^3 $, we let $ \n\otimes\m\in\MM^{3\times 3} $ with $ (\n\otimes\m)_{ij}=\n_i\m_j $.
\item Assume that $ \A,\B:\om\subset\R^3\to\mathbb{M}^{3\times 3} $ are two differentiable matrix valued functions. The gradient $ \A $ is $ \na\A:=(\pa_1\A,\pa_2\A,\pa_3\A) $. Furthermore, $
\na\A:\na \B:=\pa_k\A_{ij}\pa_k\B_{ij} $. In addition, $ |\na\A|^2=\na\A:\na\A $.
\item In this paper, $ B_r(x):=\{x\in\R^3:|y-x|<r\} $. We will drop $ x $, if it is the original point.
\item $ \I $: identity matrix of order $ 3 $. $ \mathbf{O} $: zero matrix of order $ 3 $.
\item Let $ i,j\in\{1,2,3\}^2 $. $ \delta_{ij}=1 $ if $ i=j $ and $ \delta_{ij}=0 $ if $ i\neq j $.
\item For $ A\subset\R^3 $, the $ r $-neighborhood of $ A $ is
$$
B_r(A):=\bigcup_{y\in A}B_r(y)=\{y\in\R^3:\dist(y,A)<r\}.
$$
\end{itemize}

\section{Preliminary}\label{pre}

\subsection{Euler-Lagrange equation} For $ \va\in(0,1) $, assume that $ \Q_{\va}\in H_{\loc}^1(\om,\Ss_0) $ is a local minimizer of \eqref{LdG}. Let $ \PP\in C_0^{\ift}(\om,\Ss_0) $. It follows from the minimality of $ \Q_{\va} $ that
$$
\left.\f{\ud}{\ud t}\right|_{t=0}E_{\va}(\Q_{\va},\om)=0.
$$
Through simple calculations, by the arbitrariness of $ \PP $, $ \Q_{\va} $ is a weak solution of the Euler-Lagrange equation of \eqref{LdG} 
\be
-\va^2\Delta\Q-a\Q-b\Q^2+\f{b}{3}|\Q|^2\I+c|\Q|^2\Q=\mathbf{O}.\label{EL}
\ee
By Sobolev embedding theorem in $ \R^3 $, we have $ H^1\subset L^6 $, and then $ \Delta\Q_{\va}\in L_{\loc}^2 $. Using standard estimate and Sobolev embedding again, $ \Q_{\va}\in W^{2,2}\subset W^{1,6}\subset C^{0,\f{1}{2}} $. As a result, Schauder's estimate implies that $ \Q_{\va}\in C^{\ift}(\om,\Ss_0) $. Consequently, in our paper, for \eqref{EL}, we only consider smooth solutions.

\subsection{Monotonicity formula} A crucial ingredient in studying minimizers of the Landau-de Gennes energy functional \eqref{LdG} is the monotonicity formula. First introduced in \cite{MZ10}, it implies that if $ \Q_{\va} $ is a smooth solution of \eqref{EL}, then $ r^{-1}E_{\va}(\Q_{\va},B_r(x)) $ is non-decreasing. In our paper, on the other hand, based on the original form of the monotonicity, we apply a modification with a radial weight. The modification will simplify the analysis. For the more intuitions and remarks on similar adjustments, we refer to \cite[Section 1.2 and 2.1]{FWZ24}.

\begin{defn}\label{defnofphi}
Let $ \phi\in C^{\ift}([0,+\ift),\R_{\geq 0}) $ such that the following properties hold.
\begin{enumerate}[label=$(\theenumi)$]
\item $ \supp\phi\subset[0,10) $, $ \phi(t)\geq 1 $ for any $ t\in[0,8] $, and $ \phi(0)=60 $.
\item For any $ t\in[0,+\ift) $, $ \phi(t)\geq 0 $ and $ |\phi'(t)|\leq 100 $.
\item $ -2\leq\phi'(t)\leq -1 $ for any $ t\in[0,8] $.
\item For any $ t\in\R_+ $, $ \phi'(t)\leq 0 $.
\end{enumerate}
\end{defn}

Let $ \Q\in H^1(\om,\Ss_0) $, $ x\in\om $, and $ 0<r<\f{1}{10}\dist(x,\pa\om) $. Define
$$
\Theta_r^{\phi}(\Q,x):=\f{1}{r}\int e_{\va}(\Q)\phi\(\f{|y-x|^2}{r^2}\)\ud y.
$$
The modified monotonicity formula is as follows.

\begin{prop}\label{Monotone}
Assume that $ \Q_{\va}:\om\to\Ss_0 $ is a smooth solution of \eqref{EL}. Let $ x\in\om $ and $ 0<r<R<\f{1}{10}\dist(x,\pa\om) $. Then
\be
\begin{aligned}
&\Theta_R^{\phi}(\Q_{\va},x)-\Theta_r^{\phi}(\Q_{\va},x)\\
&\quad\quad=\int_r^R\[-\f{2}{\rho^2}\int\left|\f{y-x}{\rho}\cdot\na\Q_{\va}\right|^2\phi'\(\f{|y-x|^2}{\rho^2}\)+\f{2}{\va^2\rho^2}\int f(\Q_{\va})\phi\(\f{|y-x|^2}{\rho^2}\)\]\ud\rho.
\end{aligned}\label{Monotone1}
\ee
\end{prop}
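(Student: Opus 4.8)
\emph{Strategy.} The identity \eqref{Monotone1} is an inner–variation (Pohozaev) identity with a radial weight, so the plan is to extract it from the divergence‑free property of the stress–energy tensor
\[
T_{kl}:=e_{\va}(\Q_{\va})\,\delta_{kl}-\pa_k\Q_{\va}:\pa_l\Q_{\va},\qquad k,l\in\{1,2,3\},
\]
associated with the functional $E_{\va}$; equivalently, one may perform directly the inner variation $\Q_{\va}^{t}(y)=\Q_{\va}(y+t(y-x)\eta(|y-x|))$ and read off the same relation. Since $\Q_{\va}$ is smooth by the discussion following \eqref{EL}, all the computations below are classical.

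\emph{Step 1: the stress tensor is divergence free.} I would check $\pa_k T_{kl}=0$ by a direct computation: the contribution $\pa_l\bigl(\tfrac12|\na\Q_{\va}|^2\bigr)$ cancels $\pa_k\Q_{\va}:\pa_k\pa_l\Q_{\va}$, and one is left with $\tfrac1{\va^2}\pa_l f(\Q_{\va})-\Delta\Q_{\va}:\pa_l\Q_{\va}$. By the chain rule $\pa_l f(\Q_{\va})=\na_{\Q}f(\Q_{\va}):\pa_l\Q_{\va}$, where $\na_{\Q}f(\Q_{\va})=-a\Q_{\va}-b\Q_{\va}^2+c|\Q_{\va}|^2\Q_{\va}$; comparing with \eqref{EL}, the only discrepancy between $\na_{\Q}f(\Q_{\va})$ and $\va^2\Delta\Q_{\va}$ is the term $\tfrac b3|\Q_{\va}|^2\I$, which is annihilated upon contraction with $\pa_l\Q_{\va}$ because $\pa_l\Q_{\va}\in\Ss_0$ is traceless. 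Hence $\na_{\Q}f(\Q_{\va}):\pa_l\Q_{\va}=\va^2\Delta\Q_{\va}:\pa_l\Q_{\va}$ and $\pa_kT_{kl}=0$. This is the one point needing genuine care; everything afterwards is bookkeeping of the radial weights.

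\emph{Step 2: weighted Pohozaev identity.} Fix $x$ and $\rho\in(r,R)$. Contracting $\pa_kT_{kl}=0$ with $(y_l-x_l)\phi\!\left(\tfrac{|y-x|^2}{\rho^2}\right)$, integrating over $\om$ and integrating by parts (no boundary term, since $\supp\phi\subset[0,10)$ and $R<\tfrac1{10}\dist(x,\pa\om)$), one gets, using $T_{kk}=e_{\va}(\Q_{\va})+\tfrac2{\va^2}f(\Q_{\va})$ in dimension $3$ and $T_{kl}(y_k-x_k)(y_l-x_l)=|y-x|^2\bigl(e_{\va}(\Q_{\va})-|\pa_r\Q_{\va}|^2\bigr)$ with $\pa_r\Q_{\va}=\tfrac{y-x}{|y-x|}\!\cdot\!\na\Q_{\va}$, a formula expressing $\int e_{\va}(\Q_{\va})\phi(\cdot)\,\ud y$ in terms of $\int f(\Q_{\va})\phi(\cdot)\,\ud y$ and $\int|y-x|^2\bigl(e_{\va}(\Q_{\va})-|\pa_r\Q_{\va}|^2\bigr)\phi'(\cdot)\,\ud y$.

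\emph{Step 3: differentiate $\Theta_{\rho}^{\phi}$ and integrate.} Setting $g(\rho):=\Theta_{\rho}^{\phi}(\Q_{\va},x)$ and differentiating under the integral sign (justified by smoothness of $\Q_{\va}$), $g'(\rho)$ is the sum of $-\rho^{-2}\int e_{\va}(\Q_{\va})\phi(\cdot)\,\ud y$ and a $\phi'$‑term. Substituting the Pohozaev identity of Step 2 for the first summand, the two $e_{\va}\phi'$‑contributions cancel and $\phi'$ gets multiplied only by $|\pa_r\Q_{\va}|^2$; rewriting $\tfrac{|y-x|^2}{\rho^2}|\pa_r\Q_{\va}|^2=\bigl|\tfrac{y-x}{\rho}\!\cdot\!\na\Q_{\va}\bigr|^2$ leaves exactly
\[
g'(\rho)=-\frac{2}{\rho^2}\int\left|\frac{y-x}{\rho}\cdot\na\Q_{\va}\right|^2\phi'\!\left(\frac{|y-x|^2}{\rho^2}\right)\ud y+\frac{2}{\va^2\rho^2}\int f(\Q_{\va})\phi\!\left(\frac{|y-x|^2}{\rho^2}\right)\ud y .
\]
Integrating this from $r$ to $R$ yields \eqref{Monotone1}.
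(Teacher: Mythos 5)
Your proposal is correct and follows essentially the same route as the paper: testing the divergence-free stress–energy tensor against $(y-x)\phi\bigl(\tfrac{|y-x|^2}{\rho^2}\bigr)$ to obtain the weighted Pohozaev identity, then differentiating $\Theta_\rho^{\phi}$ and cancelling the $e_{\va}\phi'$ terms. The only difference is that you verify $\pa_kT_{kl}=0$ directly (correctly handling the $\tfrac{b}{3}|\Q_{\va}|^2\I$ term via tracelessness of $\pa_l\Q_{\va}$), whereas the paper cites \cite[Lemma 22]{Can17} for that identity.
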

\begin{proof}
Up to a translation, let $ x=0 $. For simplicity, define
$$
\phi_{\rho}(y):=\phi\(\f{|y|^2}{\rho^2}\)\quad\text{and}\quad\phi_{\rho}'(y):=\phi'\(\f{|y|^2}{\rho^2}\).
$$
We have the stress-energy identity (see \cite[Lemma 22]{Can17})
$$
\pa_j(e_{\va}(\Q_{\va})\delta_{ij}-\pa_i\Q_{\va}:\pa_j\Q_{\va})=0.
$$
Testing the above equality with $ y\phi_{\rho} $, we have
$$
\int(y_j\phi_{\rho}\pa_je_{\va}(\Q_{\va})-y_i\phi_{\rho}\pa_j(\pa_i\Q_{\va}:\pa_j\Q_{\va}))=0.
$$
Applying integration by parts,
\be
\int\f{2}{\rho^2}\phi_{\rho}'|y\cdot\na\Q_{\va}|^2=\int e_{\va}(\Q_{\va})\phi_{\rho}+\int\f{2}{\va^2}f(\Q_{\va})\phi_{\rho}+\int\f{2|y|^2}{\rho^2}\phi_{\rho}'e_{\va}(\Q_{\va}).\label{addingone}
\ee
On the other hand,
\begin{align*}
\f{\ud}{\ud\rho}\(\f{1}{\rho}\int e_{\va}(\Q_{\va})\phi_{\rho}\)&=-\f{1}{\rho^2}\int e_{\va}(\Q_{\va})\phi_{\rho}-\f{2}{\rho^4}\int e_{\va}(\Q_{\va})|y|^2\phi_{\rho}'\\
&=-\f{2}{\rho^4}\int|y\cdot\na\Q_{\va}|^2\phi_{\rho}'+\f{2}{\va^2\rho^2}\int f(\Q_{\va})\phi_{\rho},
\end{align*}
where for the second inequality, we have used \eqref{addingone}. It implies \eqref{Monotone1}.
\end{proof}

\subsection{Compactness} Recall that for a sequence of minimizers of \eqref{LdG}, up to a subsequence, it converges to a minimizer of \eqref{Dirichletenergy} strongly in $ H_{\loc}^1 $. For later use, we summarize such properties in the following result. The proof follows from the application of a comparison map using interpolation results developed by Luckhaus in \cite{Luc88}. One can also refer to \cite[Proposition 47]{Can17} for a detailed reasoning.

\begin{prop}\label{H1convergence}
Assume that $ \{\Q_{\va}\}_{\va\in(0,1)}\subset H_{\loc}^1(\om,\Ss_0) $ is a sequence of local minimizers of \eqref{LdG} such that for any $ K\subset\subset\om $,
$$
\sup_{\va\in(0,1)}\(E_{\va}(\Q_{\va},K)+\|\Q_{\va}\|_{L^{\ift}(K)}\)<+\ift.
$$
Then, there exists $ \va_i\to 0^+ $ such that
\begin{gather*}
\Q_{\va_i}\to\Q_0\text{ strongly in }H_{\loc}^1(\om,\Ss_0),\\
\f{1}{\va_i^2}f(\Q_{\va_i})\to 0\text{ strongly in }L_{\loc}^1(\om),
\end{gather*}
where $ \Q_0\in H_{\loc}^1(\om,\cN) $ is a local minimizer of \eqref{Dirichletenergy}.
\end{prop}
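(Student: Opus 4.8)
The plan is to carry out the standard singular-perturbation compactness ($\Gamma$-convergence) argument for \eqref{LdG}: extract a weakly convergent subsequence, identify its weak limit as an $\cN$-valued map, prove the lower energy bound by weak lower semicontinuity, and prove the matching upper bound by constructing comparison (recovery) maps with the help of Luckhaus's interpolation lemma \cite{Luc88} (see also \cite[Proposition 47]{Can17}). Once the upper bound is available, combining it with the lower bound and the minimality of $\Q_{\va}$ simultaneously forces strong $H^1_{\loc}$-convergence, the $L^1_{\loc}$-convergence of the bulk term, and the local minimality of the limit.

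For the compactness and identification of the limit: since $e_{\va}(\Q)\geq\frac12|\na\Q|^2\geq0$, the hypothesis $\sup_{\va}E_{\va}(\Q_{\va},K)<+\ift$ bounds $\{\Q_{\va}\}$ in $H^1(K)$ for each $K\subset\subset\om$. By Banach--Alaoglu, Rellich--Kondrachov and a diagonal argument over an exhaustion of $\om$, there are $\va_i\to0^+$ and $\Q_0\in H^1_{\loc}(\om,\Ss_0)$ with $\Q_{\va_i}\wc\Q_0$ in $H^1_{\loc}$, $\Q_{\va_i}\to\Q_0$ in $L^2_{\loc}$ and a.e., and $\|\Q_0\|_{L^{\ift}(K)}\leq\sup_{\va}\|\Q_{\va}\|_{L^{\ift}(K)}$. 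From $\int_K\va_i^{-2}f(\Q_{\va_i})\leq E_{\va_i}(\Q_{\va_i},K)\leq C(K)$ we get $\int_K f(\Q_{\va_i})\leq C(K)\va_i^2\to0$; since $f\geq0$ is continuous, Fatou's lemma and a.e.\ convergence give $\int_K f(\Q_0)=0$, so $\Q_0(x)\in f^{-1}(0)=\cN$ for a.e.\ $x$, i.e.\ $\Q_0\in H^1_{\loc}(\om,\cN)$. Weak lower semicontinuity of the Dirichlet integral then gives, for every $B_r(x)\subset\subset\om$,
\[
\tfrac12\!\int_{B_r(x)}|\na\Q_0|^2\;\leq\;\liminf_{i\to+\ift}\,\tfrac12\!\int_{B_r(x)}|\na\Q_{\va_i}|^2\;\leq\;\liminf_{i\to+\ift}E_{\va_i}(\Q_{\va_i},B_r(x)).
\]

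The heart of the proof is the upper bound. Fix $B_r(x)\subset\subset\om$ and a competitor $\PP\in H^1(B_r(x),\cN)$ with $\PP=\Q_0$ on $\pa B_r(x)$; the goal is a sequence $\PP_{\va_i}\in H^1(B_r(x),\Ss_0)$ with $\PP_{\va_i}=\Q_{\va_i}$ on $\pa B_r(x)$ and $\limsup_i E_{\va_i}(\PP_{\va_i},B_r(x))\leq\frac12\int_{B_r(x)}|\na\PP|^2$. Choosing $r$ in a set of full measure, one may assume that, along the subsequence, $\Q_{\va_i}|_{\pa B_r(x)}\to\Q_0|_{\pa B_r(x)}$ in $L^2$, the tangential energies of $\Q_{\va_i}$ and $\Q_0$ on $\pa B_r(x)$ are uniformly bounded, and $\int_{\pa B_r(x)}\dist(\Q_{\va_i},\cN)^2\lesssim\va_i^2$ (the last using $f(\Q)\gtrsim\dist(\Q,\cN)^2$ near $\cN$). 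One then rescales $\PP$ into $B_{(1-\delta)r}(x)$ (which lowers its Dirichlet energy and contributes no bulk) and, on the thin shell $B_r(x)\setminus B_{(1-\delta)r}(x)$, applies Luckhaus's lemma to interpolate between $\Q_{\va_i}$ on $\pa B_r(x)$ and the rescaled $\PP$, whose inner trace is $\Q_0|_{\pa B_r(x)}$; this produces a map whose Dirichlet energy on the shell is $O(\delta)$ times the tangential energies plus $O(\delta^{-1})$ times $\|\Q_{\va_i}-\Q_0\|_{L^2(\pa B_r(x))}^2$ and whose distance to $\cN$ is quantitatively small. Composing this interpolant with the smooth Lipschitz nearest-point retraction of a tubular neighborhood of the compact submanifold $\cN\subset\Ss_0$ onto $\cN$ on all but an outermost layer (across which one retains the interpolant so as to preserve the boundary trace, its bulk contribution being controlled by the sharp distance estimate) makes the bulk term of $\PP_{\va_i}$ negligible; letting $i\to+\ift$ and then $\delta\to0$ gives the upper bound. \emph{This comparison-map construction is the main obstacle}: the competitor must match the non-$\cN$-valued map $\Q_{\va_i}$ on $\pa B_r(x)$ while having asymptotically vanishing bulk energy, and reconciling these two requirements is exactly what the quantitative distance-to-$\cN$ estimate in Luckhaus's lemma — together with the smallness $\int\dist(\Q_{\va_i},\cN)^2=O(\va_i^2)$ and a careful choice of the sphere $\pa B_r(x)$ — is designed to achieve; this is the technical content we borrow from \cite{Luc88,Can17}.

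Finally, by minimality $E_{\va_i}(\Q_{\va_i},B_r(x))\leq E_{\va_i}(\PP_{\va_i},B_r(x))$ for every competitor. Taking $\PP=\Q_0$ and combining with the lower bound forces
\[
\lim_{i\to+\ift}E_{\va_i}(\Q_{\va_i},B_r(x))=\tfrac12\!\int_{B_r(x)}|\na\Q_0|^2,
\]
and since $\va_i^{-2}f(\Q_{\va_i})\geq0$ this yields both $\int_{B_r(x)}|\na\Q_{\va_i}|^2\to\int_{B_r(x)}|\na\Q_0|^2$ and $\int_{B_r(x)}\va_i^{-2}f(\Q_{\va_i})\to0$. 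The first, together with weak $H^1$-convergence and strong $L^2$-convergence, upgrades to strong convergence $\Q_{\va_i}\to\Q_0$ in $H^1(B_r(x))$; the second, together with nonnegativity, gives $\va_i^{-2}f(\Q_{\va_i})\to0$ in $L^1(B_r(x))$. For an arbitrary competitor $\PP$ the same chain gives $\int_{B_r(x)}|\na\Q_0|^2\leq\int_{B_r(x)}|\na\PP|^2$, so $\Q_0$ is a local minimizer of \eqref{Dirichletenergy}. Covering an arbitrary $K\subset\subset\om$ by finitely many such balls promotes all the convergences to $H^1_{\loc}$ and $L^1_{\loc}$, completing the proof.
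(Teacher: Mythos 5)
Your proposal is correct and follows essentially the same route as the paper, which proves this proposition simply by invoking the Luckhaus interpolation/comparison-map argument of \cite{Luc88} and \cite[Proposition 47]{Can17} — exactly the construction you outline (compactness and identification of the $\cN$-valued limit, lower semicontinuity, Luckhaus recovery maps matching $\Q_{\va_i}$ on a well-chosen sphere, then minimality forcing energy convergence, strong $H^1_{\loc}$ convergence, $L^1_{\loc}$ vanishing of the bulk term, and local minimality of $\Q_0$). No discrepancy to report.
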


\subsection{Regularity and partial regularity} In this subsection, we present some regularity results for minimizers of \eqref{LdG} or solutions of the Euler-Lagrange equation \eqref{EL}.

\begin{lem}\label{Apriori}
Let $ \va\in(0,1) $, $ M,r>0 $, and $ x\in\R^3 $. Assume that $ \Q_{\va}:B_{2r}(x)\to\Ss_0 $ is a weak solution of \eqref{EL} with $
\|\Q_{\va}\|_{L^{\ift}(B_{2r}(x))}\leq M $. Then, $ \Q_{\va} $ is smooth and satisfies
\be
\|\na\Q_{\va}\|_{L^{\ift}(B_r(x))}\leq C\(\f{1}{\va}+\f{1}{r}\),\label{Apriori1}
\ee
where $ C>0 $ depends only on $ a,b,c $, and $ M $.
\end{lem}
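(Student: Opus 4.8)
The plan is to establish \eqref{Apriori1} by a rescaling argument that reduces to the case $\va \sim r$, where the equation is uniformly elliptic with bounded data, combined with interior elliptic estimates bootstrapped from $L^\infty$ bounds. First I would reduce to $x=0$ by translation and to a normalized scale by dilation: set $\wt{\Q}(y) := \Q_\va(\rho y)$ for a scale $\rho$ to be chosen; then $\wt{\Q}$ solves $-(\va/\rho)^2\Delta\wt{\Q} = a\wt{\Q} + b\wt{\Q}^2 - \f{b}{3}|\wt{\Q}|^2\I - c|\wt{\Q}|^2\wt{\Q}$ on a rescaled ball, still with the same $L^\infty$ bound $M$. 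The two terms on the right of \eqref{Apriori1} correspond to the two competing scales: the $1/r$ term is the generic interior-gradient scale coming from the $L^\infty$ bound alone (choosing $\rho \sim r$), while the $1/\va$ term dominates when $\va \ll r$ and comes from the singular potential.

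For the main estimate I would distinguish two regimes. If $\va \geq r$, rescale by $\rho = r$ so that the coefficient $(\va/\rho)^2 \geq 1$; the equation $-\Delta\wt\Q = (\rho/\va)^2 g(\wt\Q)$ with $g$ a fixed polynomial and $\|g(\wt\Q)\|_{L^\infty} \le C(a,b,c,M)$ has right-hand side bounded in $L^\infty$ by $C(a,b,c,M)$ (since $(\rho/\va)^2 \le 1$), so interior $L^p$ and then Schauder estimates on $B_{3/2}\subset B_2$ give $\|\na\wt\Q\|_{L^\infty(B_1)}\le C(a,b,c,M)$; unscaling yields $\|\na\Q_\va\|_{L^\infty(B_r)}\le C/r$, which is dominated by the right side of \eqref{Apriori1}. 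If $\va < r$, the estimate is local at scale $\va$: for any $z \in B_r(x)$ we have $B_{2\va}(z)\subset B_{2r}(x)$ (after possibly shrinking to $B_{3r/2}$, harmless by covering), so rescale by $\rho = \va$ centered at $z$; now the equation becomes $-\Delta\wt\Q = g(\wt\Q)$ on $B_2$ with coefficient exactly $1$ and $\|g(\wt\Q)\|_{L^\infty}\le C(a,b,c,M)$. Interior elliptic regularity (Calderón–Zygmund to get $\wt\Q\in W^{2,p}$ for all $p<\infty$, hence $\wt\Q \in C^{1,\al}$, then Schauder to bootstrap to smoothness) gives $\|\na\wt\Q\|_{L^\infty(B_1)}\le C(a,b,c,M)$, and unscaling gives $|\na\Q_\va(z)|\le C/\va$. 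Taking the supremum over $z$ and combining the two regimes produces \eqref{Apriori1}, and smoothness follows from the same bootstrap (Schauder applied iteratively since the nonlinearity is polynomial).

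The main obstacle — really the only delicate point — is making the covering/localization in the regime $\va < r$ clean: a ball $B_{2\va}(z)$ with $z$ near $\partial B_r(x)$ need not sit inside $B_{2r}(x)$, so one must either state the lemma with the interior ball slightly smaller or absorb this by first proving the bound on $B_{(1+\theta)r}$ and then covering $B_r$ by balls of radius comparable to $\va$ centered at interior points; since the hypothesis is on $B_{2r}(x)$ this causes no loss. One should also check that the constant genuinely depends only on $a,b,c,M$ and not on $\va$ or $r$: this is automatic because after rescaling the PDE has coefficients and nonlinearity independent of $\va,r$ and data bounded purely in terms of $M$ and $a,b,c$, so the elliptic constants are universal. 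A minor point to record is that the polynomial nonlinearity $g(\Q) = a\Q + b\Q^2 - \f{b}{3}|\Q|^2\I - c|\Q|^2\Q$ is smooth with all derivatives bounded on $\{|\Q|\le M\}$, which is what powers the Schauder bootstrap to $C^\infty$.
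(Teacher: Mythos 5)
Your proof is correct, but it takes a different route from the paper. The paper does not rescale or split into regimes: it applies the gradient interpolation estimate of \cite[Lemma A.1]{BBH93} componentwise, namely that a bounded solution of $-\Delta u=g$ satisfies $|\na u(y)|^2\leq C\(\|g\|_{L^{\ift}}\|u\|_{L^{\ift}}+\dist^{-2}(y,\pa U)\|u\|_{L^{\ift}}^2\)$; writing \eqref{EL} as $\Delta(\Q_{\va})_{ij}=g_{ij}$ with $\|g\|_{L^{\ift}}\leq C(a,b,c,M)\va^{-2}$ and $\dist(y,\pa B_{2r}(x))\geq r$ for $y\in B_r(x)$, this yields both terms $\va^{-1}$ and $r^{-1}$ of \eqref{Apriori1} in one stroke. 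Your argument reproves the same bound from scratch: the case $\va\geq r$ handled by rescaling to unit scale with right-hand side $(\f{r}{\va})^2 g(\wt{\Q})$ bounded by $C(a,b,c,M)$, and the case $\va<r$ handled pointwise by zooming to scale $\va$ around each $z\in B_r(x)$, with interior $W^{2,p}$/Schauder estimates supplying the gradient bound and the $C^{\ift}$ bootstrap in both cases. This is self-contained (only standard interior elliptic theory, no citation of the BBH lemma) at the price of the case distinction and the localization issue near $\pa B_r(x)$, which you correctly flag and which is resolved simply by noting $\dist(z,\pa B_{2r}(x))\geq r>\va$ for $z\in B_r(x)$, so $B_{\va}(z)\subset B_{2r}(x)$ and no covering of a slightly larger ball is actually needed. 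Your observation that the rescaled problem has data controlled only by $a,b,c,M$, so the constant is independent of $\va$ and $r$, is exactly the point that makes the scaling route legitimate.
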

\begin{proof}
The smoothness follows from the standard Schauder's inequality. Moreover, \cite[Lemma A.1]{BBH93} implies that for a scalar function $ u $, solving $ -\Delta u=g $ in a bounded domain $ U\subset\R^n $, there is
\be
|\na u(y)|^2\leq C\left(\|g\|_{L^{\ift}(U)}\|u\|_{L^{\ift}(\om)}+\f{1}{\dist^2(y,\pa U)}\|u\|_{L^{\ift}(U)}^2\right),\label{A1lemma}
\ee
where $ C=C(n)>0 $. Fix $ i,j\in\{1,2,3\} $. Choose $ U=B_{2r}(x) $, $ y\in B_{2r}(x) $, and define
$$
g_{ij}:=-\f{1}{\va^2}\(a(\Q_{\va})_{ij}+b(\Q_{\va}^2)_{ij}-\f{b}{3}|\Q_{\va}|^2\I_{ij}-c|\Q_{\va}|^2(\Q_{\va})_{ij}\).
$$
By \eqref{EL}, $ \Delta(\Q_{\va})_{ij}=g_{ij} $. Noting that $ \|\Q_{\va}\|_{L^{\ift}}(B_{2r}(x))\leq M $, the estimate \eqref{Apriori1} follows directly from the application of \eqref{A1lemma}.
\end{proof}

Recall that for harmonic maps, the partial regularity theory by Schoen and Uhlenbeck \cite{SU82} establishes that if the localized, scale-invariant energy is sufficiently small, then the minimizer must be smooth around that point. In the context of the Landau-de Gennes model, analogous partial regularity properties holds. We summarize these results as follows.

\begin{lem}\label{partialregularity1}
Let $ \va\in(0,1) $, $ M,r>0 $, and $ x\in\R^3 $. Assume that $ \Q_{\va}:B_{4r}(x)\to\Ss_0 $ is a local minimizer of \eqref{LdG} with $ \|\Q_{\va}\|_{L^{\ift}(B_{4r}(x))}\leq M $. For any $ \delta>0 $, there are $ \eta,\Lda>0 $, depending only on $ a,b,c,\delta $, and $ M $ such that if $ r\in(\Lda\va,1) $ and
$$
r^{-1}E_{\va}(\Q_{\va},B_{2r}(x))<\eta,
$$
then
$$
\|\dist(\Q_{\va},\cN)\|_{L^{\ift}(B_r(x))}<\delta.
$$
\end{lem}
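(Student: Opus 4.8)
The plan is to prove this small-energy regularity statement by contradiction, via a two-scale blow-up: one rescaling to normalize the radius, and a second, crucial, rescaling at the defect scale $\va$. By the scaling invariance noted in the introduction, $y\mapsto\Q_\va(x+ry)$ is a local minimizer of \eqref{LdG} with parameter $\va/r$, and $r^{-1}E_\va(\Q_\va,B_{2r}(x))$ equals the (unscaled) energy $E_{\va/r}$ of the rescaled map over $B_2$; hence it suffices to treat $x=0$, $r=1$. So I want to produce $\eta,\Lda>0$, depending only on $a,b,c,\delta,M$, such that any local minimizer $\Q_\va$ of \eqref{LdG} on $B_4$ with $\|\Q_\va\|_{L^\ift(B_4)}\le M$, $\va<1/\Lda$, and $E_\va(\Q_\va,B_2)<\eta$ satisfies $\dist(\Q_\va,\cN)<\delta$ on $B_1$. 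If this fails for some $\delta_0>0$, there are local minimizers $\Q_k$ of \eqref{LdG} with parameters $\va_k\to0$ (we may take $\va_k<1/k$) on $B_4$, bounded by $M$ in $L^\ift(B_4)$, with $E_{\va_k}(\Q_k,B_2)<1/k$, yet with $\dist(\Q_k(y_k),\cN)\ge\delta_0$ for some $y_k\in\ol{B_1}$ (the supremum over $B_1$ is attained since $\dist(\Q_k(\cdot),\cN)$ is continuous on $\ol{B_1}$, by the regularity recalled after \eqref{EL}).

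The key step is to rescale $\Q_k$ at scale $\va_k$ about $y_k$: set $\hat\Q_k(z):=\Q_k(y_k+\va_k z)$, which is a local minimizer of \eqref{LdG} with parameter $1$ on $B_{3/\va_k}$, with $\|\hat\Q_k\|_{L^\ift}\le M$ and $\dist(\hat\Q_k(0),\cN)\ge\delta_0$. A change of variables gives $E_1(\hat\Q_k,B_\rho)=\va_k^{-1}E_{\va_k}(\Q_k,B_{\va_k\rho}(y_k))=\rho\cdot s^{-1}E_{\va_k}(\Q_k,B_s(y_k))\big|_{s=\va_k\rho}$. Now I use monotonicity — either the original form from \cite{MZ10} ($s\mapsto s^{-1}E_{\va_k}(\Q_k,B_s(y_k))$ is nondecreasing) or Proposition \ref{Monotone} (the integrand on the right of \eqref{Monotone1} is nonnegative, so $\Theta^\phi_\rho$ is nondecreasing) — to bound, for each fixed $\rho$ and all $k$ large enough that $\va_k\rho$ lies below a fixed absolute constant (so that the monotonicity formula applies on $B_{\va_k\rho}(y_k)\subset B_2$), $s^{-1}E_{\va_k}(\Q_k,B_s(y_k))\big|_{s=\va_k\rho}\le C\,E_{\va_k}(\Q_k,B_2)<C/k$. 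Hence $E_1(\hat\Q_k,B_\rho)\le C\rho/k\to0$ for every fixed $\rho>0$. This transfer of smallness across the $\va_k$-blow-up, made possible only by monotonicity (the naive bound $\va_k^{-1}E_{\va_k}(\Q_k,B_2)$ diverges), is the main point of the proof; everything else is routine.

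It remains to pass to the limit at the \emph{fixed} parameter $1$. Each $\hat\Q_k$ solves \eqref{EL} with $\va=1$ and is bounded by $M$, so Lemma \ref{Apriori} gives $k$-independent interior gradient bounds on every fixed ball, and Schauder bootstrapping (as in the discussion of \eqref{EL}) upgrades these to $k$-independent $C^{2}$ bounds on compact sets; by Arzelà–Ascoli and a diagonal extraction, along a subsequence $\hat\Q_k\to\hat\Q_\ift$ in $C^2_{\loc}(\R^3)$, with $\hat\Q_\ift$ a solution of \eqref{EL} with $\va=1$ on all of $\R^3$. The $C^2_{\loc}$ convergence turns $E_1(\hat\Q_k,B_\rho)\to0$ into $E_1(\hat\Q_\ift,B_\rho)=0$ for every $\rho$, hence $\int_{\R^3}\big(\f{1}{2}|\na\hat\Q_\ift|^2+f(\hat\Q_\ift)\big)=0$; since the integrand is continuous and nonnegative, $\hat\Q_\ift$ is constant and lies in $f^{-1}(0)=\cN$. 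But $\hat\Q_k(0)=\Q_k(y_k)\to\hat\Q_\ift(0)\in\cN$ while $\dist(\Q_k(y_k),\cN)\ge\delta_0$, and $\dist(\cdot,\cN)$ is continuous, so $\dist(\hat\Q_\ift(0),\cN)\ge\delta_0>0$, a contradiction. (One could instead first apply Proposition \ref{H1convergence} on $B_2$ to see that the $H^1_{\loc}$-limit of $\{\Q_k\}$ is a constant in $\cN$, but upgrading this to the uniform bound still needs the $\va_k$-scale blow-up, so arguing directly is cleaner.) The only mild care beyond the monotonicity-rescaling step is bookkeeping the radii so that all ball inclusions and the domain hypotheses of the monotonicity formula and of Lemma \ref{Apriori} are respected.
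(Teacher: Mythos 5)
Your proof is correct, and it follows the same overall contradiction-and-rescaling strategy as the paper while executing the key step by a genuinely different (self-contained) route. After the first rescaling normalizing $r=1$ (identical to the paper's, using that $r^{-1}E_{\va}(\cdot,B_{2r})$ is the energy at parameter $\va/r$ of the rescaled map), the paper passes to the limit of the whole sequence: by the Luckhaus-type compactness (Proposition \ref{H1convergence}) the rescaled minimizers converge strongly in $H^1(B_2)$ to a constant $\CC_0\in\cN$, and the pointwise conclusion is then obtained by invoking the uniform-convergence arguments of \cite[Proposition 4]{MZ10}. You instead prove the pointwise bound directly: you pick a bad point $y_k$, blow up a second time at scale $\va_k$ around it, use the monotonicity formula to transfer the smallness of $s^{-1}E_{\va_k}(\Q_k,B_s(y_k))$ from $s=1$ down to $s=\va_k\rho$ (so the parameter-$1$ rescalings have vanishing energy on every fixed ball), and then use Lemma \ref{Apriori} plus Schauder compactness at the fixed parameter $1$ to show the blow-ups converge in $C^2_{\loc}$ to a constant lying in $\cN$, contradicting $\dist(\hat\Q_k(0),\cN)\geq\delta_0$. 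This is essentially an unpacking of the clearing-out mechanism hidden in the cited \cite[Proposition 4]{MZ10} (there a bad point forces an amount $\sim\va$ of bulk energy at scale $\va$, which monotonicity propagates up to scale $r$; you propagate the upper bound downward instead), so your route trades the citation and the $H^1$-compactness step for the monotonicity formula, the a priori gradient bound, and standard elliptic compactness --- more elementary and self-contained, at the cost of length. The scaling identities, the applicability of monotonicity between the scales $\va_k\rho$ and $1$ (with $B_1(y_k)\subset B_2\subset B_4$), and the limiting argument at parameter $1$ all check out.
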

\begin{proof}
Assume that the result is not true. There exist $ r_i\to 0^+ $, $ x_i\in\R^3 $, and a sequence of minimizers of \eqref{LdG}, denoted by $ \{\Q_{\va_i}\} $ such that
\be
r_i^{-1}E_{\va_i}(\Q_{\va_i},B_{2r_i}(x))<i^{-1}\quad\text{and}\quad\wt{\va}_i:=\f{\va_i}{r_i}\to 0^+,\label{constantcondition}
\ee
but there is $ \delta_0>0 $ with
\be
\|\dist(\Q_{\va_i},\cN)\|_{L^{\ift}(B_{r_i}(x_i))}\geq\delta_0.\label{distgeq}
\ee
Define $
\wt{\Q}_{\wt{\va_i}}(y):=\Q_{\va_i}(x+r_iy) $. Given \eqref{constantcondition}, we have,
$$
\wt{\Q}_{\wt{\va}_i}\to\CC_0\text{ strongly in }H^1(B_2,\Ss_0),
$$
where $ \CC_0\in\cN $ is a constant. Similar arguments in \cite[Proposition 4]{MZ10} imply that $ \wt{\Q}_{\wt{\va}_i}\to\CC_0 $ uniformly in $ B_r(x) $, contradicting \eqref{distgeq}.
\end{proof}

\begin{lem}\label{partialregularity2}
Suppose that $ \va,M,r,x $, and $ \Q_{\va} $ are the same as those in Lemma \ref{partialregularity1}. There are $ \eta,\Lda>0 $, depending only on $ a,b,c $, and $ M $ such that if $ r\in(\Lda\va,1) $ and
\be
r^{-1}E_{\va}(\Q_{\va},B_{2r}(x))<\eta,\label{r1EvaQva}
\ee
then
$$
r^2\|e_{\va}(\Q_{\va})\|_{L^{\ift}(B_r(x))}\leq C(\eta)
$$
where $ C(\eta)>0 $ depends only on $ a,b,c,M $, and $ \eta $. Moreover,
$$
\lim_{\eta\to 0^+}C(\eta)=0.
$$
\end{lem}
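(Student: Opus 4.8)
The plan is to argue by contradiction and, after a scaling reduction, to package both the finiteness of the bound and the vanishing $C(\eta)\to 0$ into a single compactness statement, handling the $\va$-scale behaviour through an auxiliary blow-up at the ``natural scale'' of the worst point.

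\emph{Scaling reduction.} Under $y\mapsto x+ry$ the equation \eqref{EL} is invariant provided $\va$ is replaced by $\va/r$, the energy density $e_\va(\Q_\va)$ then scales by $r^{-2}$, and $r^{-1}E_\va(\Q_\va,B_{2r}(x))$, $\|\Q_\va\|_{L^\ift}$ are invariant. Hence it suffices to take $r=1$, $x=0$ and to prove: if $\Q_\va$ is a local minimizer of \eqref{LdG} on $B_4$ with $\|\Q_\va\|_{L^\ift(B_4)}\le M$, $\va<1/\Lda$, and $E_\va(\Q_\va,B_2)<\eta$, then $\|e_\va(\Q_\va)\|_{L^\ift(B_1)}\le C(\eta)$ with $C(\eta)\to 0^+$ (we fix $\eta$ below a threshold and $\Lda$ above a threshold so that Lemma \ref{partialregularity1} is applicable in what follows, e.g.\ $\eta$ small enough to reach a fixed good-region width and $\Lda$ so that $1/\Lda$ is below the corresponding $\va$-threshold). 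Set $C(\eta):=\sup\{\|e_\va(\Q_\va)\|_{L^\ift(B_1)}:\Q_\va\text{ admissible as above}\}$ and suppose the claim fails: there exist $\eta_j\to 0$, admissible $\Q_j=\Q_{\va_j}$, and $y_j\in\ol{B_1}$ with $e_{\va_j}(\Q_j)(y_j)\ge c_0$ for some fixed $c_0>0$ (if $C(\eta_j)=+\ift$ we instead take $e_{\va_j}(\Q_j)(y_j)\to+\ift$).

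\emph{Blow-up at the natural scale.} Put $\ell_j:=\min\{e_{\va_j}(\Q_j)(y_j)^{-1/2},\tfrac18\}$ and $\check\Q_j(z):=\Q_j(y_j+\ell_jz)$, a local minimizer of \eqref{LdG} on $B_2$ with $\|\check\Q_j\|_{L^\ift}\le M$ and parameter $\check\va_j:=\va_j/\ell_j$, chosen so that $e_{\check\va_j}(\check\Q_j)(0)=\ell_j^2 e_{\va_j}(\Q_j)(y_j)\in[c_0',1]$ with $c_0':=\min\{1,c_0/64\}>0$. By Proposition \ref{Monotone}, with the weight $\phi$ of Definition \ref{defnofphi} (decreasing, $\supp\phi\subset[0,10)$, $\phi\ge\mathbf 1_{[0,8]}$), the quantity $\Theta^\phi_\rho(\Q_j,y_j)$ is non-decreasing in $\rho$ and dominated by $CE_{\va_j}(\Q_j,B_2)$ for $\rho\le\tfrac14$, while $\Theta^\phi_\rho(\Q_j,y_j)\ge\rho^{-1}E_{\va_j}(\Q_j,B_{\sqrt8\rho}(y_j))$; evaluating at $\rho\asymp\ell_j$ and rescaling gives $E_{\check\va_j}(\check\Q_j,B_2)\le CE_{\va_j}(\Q_j,B_2)<C\eta_j\to 0$. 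If $\check\va_j\ge c>0$ along a subsequence, then \eqref{EL} has coefficients controlled by $c$ and $M$, interior Schauder estimates make $\{\check\Q_j\}$ precompact in $C^1_{\loc}(B_2)$, and $E_{\check\va_j}(\check\Q_j,B_2)\to 0$ forces the $C^1_{\loc}$-limit to be a constant in $\cN$; hence $e_{\check\va_j}(\check\Q_j)(0)\to 0$, contradicting $e_{\check\va_j}(\check\Q_j)(0)\ge c_0'$. So we may assume $\check\va_j\to 0$.

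\emph{The essential case $\check\va_j\to 0$, and the main obstacle.} Here Proposition \ref{H1convergence} gives $\check\Q_j\to\CC_0$ strongly in $H^1_{\loc}(B_2)$ with $\CC_0\in\cN$ constant and $\check\va_j^{-2}f(\check\Q_j)\to 0$ in $L^1_{\loc}$, while Lemma \ref{partialregularity1} (applied at $0$, valid since $E_{\check\va_j}(\check\Q_j,B_1)<\eta_j$ and $\check\va_j$ is small) gives $\dist(\check\Q_j,\cN)\to 0$ uniformly on $B_{1/2}$. It remains to upgrade this to $e_{\check\va_j}(\check\Q_j)\to 0$ uniformly near $0$, which contradicts $e_{\check\va_j}(\check\Q_j)(0)\ge c_0'$ and finishes the proof; this is the one genuinely delicate step. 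I would carry it out by a regularity analysis of \eqref{EL} in the good region $\{\dist(\cdot,\cN)<\delta_0\}$: decompose $\check\Q_j-\CC_0=T_j+N_j$ with $T_j$ tangent and $N_j$ normal to $\cN$ at $\CC_0$; the Morse--Bott nondegeneracy of $f$ along $\cN$ (the Hessian $\nabla^2 f(\CC_0)$ restricted to the normal space is $\ge c\,\Id$) turns the normal component of \eqref{EL} into a coercive equation $\check\va_j^2\Delta N_j-\nabla^2 f(\CC_0)[N_j]=O(|\check\Q_j-\CC_0|^2)$, whose maximum principle --- together with the Bochner-type bound \eqref{A1lemma} and Lemma \ref{Apriori} --- controls $\|N_j\|_{L^\ift}$, hence $\check\va_j^{-2}f(\check\Q_j)$, in terms of $\check\va_j^2\|\na\check\Q_j\|_{L^\ift}^2$ and $\dist(\check\Q_j,\cN)^2$; the tangential component satisfies a perturbed harmonic-map system, to which a Schoen--Uhlenbeck-type small-energy estimate (in the spirit of \cite{MZ10,NZ13}, as used for Lemma \ref{partialregularity1}) applies, bounding $\|\na\check\Q_j\|_{L^\ift(B_{1/4})}$ by a power of $E_{\check\va_j}(\check\Q_j,B_1)\to 0$. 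Feeding these back --- the coupling being mild because the normal estimate carries the prefactor $\check\va_j^2$ --- yields $e_{\check\va_j}(\check\Q_j)\to 0$ uniformly near $0$. I expect this good-region bootstrap to be the main obstacle: decoupling the almost-harmonic-map behaviour of $T_j$ from the coercive, $\va$-scaled behaviour of $N_j$ and controlling the quadratic cross terms so that the two bounds close without circularity. It is precisely here that the hypothesis $r>\Lda\va$ (so $\check\va_j<1$ small) and the smallness of the energy at unit scale --- which force $T_j$ to be nearly constant and $\dist(\check\Q_j,\cN)$ to be of order $\check\va_j$ --- are used in an essential way, ruling out the layered profiles along which $e_{\check\va_j}(\check\Q_j)(0)$ could otherwise stay bounded away from $0$.
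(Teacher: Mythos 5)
Your overall architecture (scaling reduction, contradiction with $\eta_j\to0$, the case $\check\va_j\ge c>0$, and a compactness argument to force $C(\eta)\to0$) is reasonable and the elementary steps are fine, but the proof has a genuine gap exactly where you flag it: the case $\check\va_j\to 0$. At that point you know $\check\Q_j\to\CC_0$ in $H^1_{\loc}$, $E_{\check\va_j}(\check\Q_j,B_2)\to0$ and $\dist(\check\Q_j,\cN)\to0$ uniformly, and you must upgrade this to a pointwise bound $e_{\check\va_j}(\check\Q_j)(0)\to0$. That upgrade is precisely the content of the lemma itself (an interior $L^\ift$ estimate on $e_\va$ under small scaled energy and closeness to $\cN$), so your blow-up at the scale $\ell_j=e_{\va_j}(\Q_j)(y_j)^{-1/2}$ has not reduced the difficulty: because you blow up at the bad point rather than performing a Schoen-type point selection, you gain no control on $e_{\check\va_j}(\check\Q_j)$ away from the origin, and the rescaled problem is the original statement again. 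The sketched tangent/normal bootstrap does not close this: the decomposition at the fixed value $\CC_0$ with only quadratic errors needs $\|\check\Q_j-\CC_0\|_{L^\ift}$ small, which you have not established (you only have uniform closeness to $\cN$ and $L^2$ closeness to $\CC_0$), and the ``Schoen--Uhlenbeck-type small-energy estimate'' you invoke for the tangential part is essentially the gradient bound being proved, so as written the argument is an unproved black box bordering on circularity. Minor additional slip: the claim that $\Theta^\phi_\rho$ is dominated by $CE_{\va_j}(\Q_j,B_2)$ ``for $\rho\le\tfrac14$'' should be ``at $\rho=\tfrac14$, then use monotonicity downward''; that part is repairable.

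For comparison, the paper closes exactly this step by quotation rather than re-derivation: Lemma \ref{partialregularity1} gives $\dist(\Q_\va,\cN)<\delta$ on $B_{3r/2}(x)$, and then \cite[Lemma 7]{MZ10} (a Bochner-type interior estimate valid once $\Q_\va$ is $\delta$-close to $\cN$ and the scaled energy is small) yields the uniform bound $r^2\|e_\va(\Q_\va)\|_{L^\ift(B_r(x))}\le C(a,b,c,M)$; only afterwards does a compactness argument give $C(\eta)\to0$, and that compactness argument works precisely because the already-established uniform bound supplies the gradient bounds needed to pass the energy density to the limit. So the fix for your proof is either to cite \cite[Lemma 7]{MZ10} (or reprove it in full, e.g.\ via the nearest-point projection decomposition as in \cite{Can17,MZ10} rather than a linearization at $\CC_0$) before running the contradiction/compactness step, or to add a point-selection argument so that the rescaled sequence carries uniform energy-density bounds on a ball of definite size; without one of these the central estimate is missing.
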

\begin{proof}
For $ \delta>0 $ to be determined later, we can choose $ (\eta,\Lda)=(\eta,\Lda)(a,b,c,\delta,M)>0 $ as in Lemma \ref{partialregularity1} such that if $ r\in(\Lda\va,1) $ with \eqref{r1EvaQva}, then $ \dist(\Q_{\va},\cN)<\delta $ in $ B_{\f{3r}{2}}(x) $. Applying \cite[Lemma 7]{MZ10}, we can choose appropriate $ \delta=\delta(a,b,c,M)>0 $ and set $ \eta>0 $ smaller if necessary to deduce
$$
r^2\|e_{\va}(\Q_{\va})\|_{L^{\ift}(B_r(x))}\leq C(a,b,c,M).
$$
The dependence of the constants $ C $ and $ \eta $ follows from a compactness argument. 
\end{proof}

\subsection{Characterization of the bad behavior of minimizers}

We first introduce the regular scale to give a quantitative characterization of regularity. 

\begin{defn}[Regular scale]\label{regular}
Let $ \va\in(0,1) $. Assume that $ \Q_{\va}\in H^1(B_2,\Ss_0) $ is a local minimizer of \eqref{LdG}. For $ x\in B_1 $, we define 
$$
r(\Q_{\va},x):=\sup\left\{0\leq r\leq 1:r^2\|e_{\va}(\Q_{\va})\|_{L^{\ift}(B_r(x))}\leq 1\right\}.
$$
\end{defn}

For a minimizer $ \Q_{\va} $ of \eqref{LdG} in $ \om\subset\R^3 $, Lemma \ref{partialregularity1} and \ref{partialregularity2} motivate us that the ``bad points" are those where $ e_{\va}(\Q_{\va}) $ and $ \dist(\Q_{\va},\cN) $ are large. To this reason, for $ \Q_{\va} $ as in \eqref{regular}, we define the collection of \emph{bad points} with parameters $ \delta,r>0 $ as
$$
\Bad(\Q_{\va};r,\delta):=\left\{y\in\om:r(\Q_{\va},y)<r\right\}\cup\left\{y\in\om:\dist(\Q_{\va},\cN)>\delta\right\}.
$$
The following lemma implies that if a minimizer $ \Q_{\va} $ is invariant in one direction in $ B_r(x) $ and the difference of density $ \Theta_{\cdot}^{\phi}(\Q_{\va},x) $ is small at two scales, then it behaves ``well" near $ x $.

\begin{lem}\label{regularscalelem}
Let $ \va\in(0,1) $, $ M,r>0 $, and $ x\in\R^3 $. Assume that $ \Q_{\va}\in H^1(B_{10r}(x),\Ss_0) $ is a local minimizer of \eqref{LdG} with 
$$
r^{-1}E_{\va}(\Q_{\va},B_{10r}(x))+\|\Q_{\va}\|_{L^{\ift}(B_{10r}(x))}\leq M.
$$
For any $ \delta>0 $, there are $ \eta,\Lda>0 $, depending only on $ a,b,c,\delta $, and $ M $ such that if $ r\in(\Lda\va,1) $,
\be
\Theta_r^{\phi}(\Q_{\va},x)-\Theta_{\f{r}{2}}^{\phi}(\Q_{\va},x)<\eta,\label{pincheta1}
\ee
and
$$
\inf_{v\in\Ss^2}\f{1}{r}\int_{B_r(x)}|v\cdot\na\Q_{\va}|^2<\eta,
$$
then $ \dist(\Q_{\va},\cN)<\delta $ in $ B_{\f{r}{2}}(x) $ and $ r(\Q_{\va},x)\geq\f{r}{2} $. In particular $ x\notin\Bad\(\Q_{\va};\f{r}{2},\delta\) $.
\end{lem}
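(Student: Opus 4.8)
The plan is a blow-up (compactness) argument by contradiction. Suppose the statement is false: there are $\delta_0>0$ and sequences $\va_i\in(0,1)$, $r_i>0$, $x_i\in\R^3$, together with local minimizers $\Q_{\va_i}\in H^1(B_{10r_i}(x_i),\Ss_0)$ of \eqref{LdG} satisfying $r_i^{-1}E_{\va_i}(\Q_{\va_i},B_{10r_i}(x_i))+\|\Q_{\va_i}\|_{L^\ift(B_{10r_i}(x_i))}\le M$, $r_i>i\va_i$, $\Theta_{r_i}^\phi(\Q_{\va_i},x_i)-\Theta_{r_i/2}^\phi(\Q_{\va_i},x_i)<\tfrac1i$ and $\inf_{v\in\Ss^2}r_i^{-1}\int_{B_{r_i}(x_i)}|v\cdot\na\Q_{\va_i}|^2<\tfrac1i$ (this is the negated statement with $\eta=\tfrac1i$ and $\Lda=i$), yet such that the conclusion with $\delta=\delta_0$ fails for every $i$. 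Put $\wt\Q_i(y):=\Q_{\va_i}(x_i+r_iy)$ on $B_{10}$, a local minimizer of \eqref{LdG} with parameter $\tilde\va_i:=\va_i/r_i<\tfrac1i\to0^+$. The bound $E_{\tilde\va_i}(\wt\Q_i,B_{10})+\|\wt\Q_i\|_{L^\ift(B_{10})}\le M$ is preserved, and by the scale covariance of $\Theta^\phi$ and of the one-direction energy we have $\Theta_1^\phi(\wt\Q_i,0)-\Theta_{1/2}^\phi(\wt\Q_i,0)<\tfrac1i$ and $\inf_{v\in\Ss^2}\int_{B_1}|v\cdot\na\wt\Q_i|^2<\tfrac1i$. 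By Proposition \ref{H1convergence}, along a subsequence $\wt\Q_i\to\Q_*$ strongly in $H^1_{\loc}(B_{10},\Ss_0)$ and $\tilde\va_i^{-2}f(\wt\Q_i)\to0$ in $L^1_{\loc}(B_{10})$, where $\Q_*\in H^1_{\loc}(B_{10},\cN)$ is a local minimizer of \eqref{Dirichletenergy}; in particular $\Q_*\in H^1(B_1,\cN)$.

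The next step is to extract two symmetries of $\Q_*$ on $B_1$. In the monotonicity identity \eqref{Monotone1} both terms on the right-hand side are nonnegative (since $\phi'\le0$ and $f\ge0$), so $\Theta_1^\phi(\wt\Q_i,0)-\Theta_{1/2}^\phi(\wt\Q_i,0)\to0$ forces $\int_{1/2}^1\rho^{-2}\int|\tfrac{y}{\rho}\cdot\na\wt\Q_i|^2(-\phi')\big(\tfrac{|y|^2}{\rho^2}\big)\,\ud y\,\ud\rho\to0$. Restricting the inner integral to $B_1$, where $|y|^2/\rho^2\le4<8$ and hence $-\phi'(|y|^2/\rho^2)\ge1$ by Definition \ref{defnofphi}, yields $\int_{B_1}|y\cdot\na\wt\Q_i|^2\to0$; together with $\na\wt\Q_i\to\na\Q_*$ in $L^2(B_1)$ this gives $y\cdot\na\Q_*=0$ a.e.\ in $B_1$, i.e.\ $\Q_*$ is $0$-homogeneous there. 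Separately, take $v_i\in\Ss^2$ realizing $\inf_{v\in\Ss^2}\int_{B_1}|v\cdot\na\wt\Q_i|^2<\tfrac1i$ and pass to a further subsequence with $v_i\to v_*\in\Ss^2$; using the $L^2(B_1)$-convergence of the gradients and the uniform bound $\|\na\wt\Q_i\|_{L^2(B_1)}\le\sqrt{2M}$ we obtain $v_*\cdot\na\Q_*=0$ a.e.\ in $B_1$, i.e.\ $\Q_*$ is invariant under translations in the direction $v_*$.

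The crux, which I expect to be the main obstacle, is the rigidity forced by these two symmetries: a map on $B_1$ that is simultaneously $0$-homogeneous and translation-invariant in a fixed direction $v_*$ must be constant. Indeed, the connected pieces in $B_1$ of the orbits of the dilations about $0$ and of the $v_*$-translations are precisely the half-disks cut by planes through the $v_*$-axis, so $\Q_*$ is constant on each and $\Q_*(y)=\psi\big(\pi(y)/|\pi(y)|\big)$ for $y\notin\R v_*$, where $\pi$ is the orthogonal projection onto $v_*^\perp$ and $\psi\colon\Ss^2\cap v_*^\perp\to\cN$. Writing $\int_{B_1}|\na\Q_*|^2$ in cylindrical coordinates $(z,\sigma,\varphi)$ about the $v_*$-axis and using that $\Q_*$ depends only on $\varphi$ shows that it equals $\big(\int_0^{2\pi}|\psi'(\varphi)|^2\,\ud\varphi\big)$ times a radial integral which diverges at $\sigma=0$; since $\Q_*\in H^1(B_1)$, this forces $\psi'\equiv0$, so $\Q_*\equiv\CC_0$ for a constant $\CC_0\in\cN$. (Alternatively, one may invoke the Schoen--Uhlenbeck partial regularity theory \cite{SU82}: $\sing(\Q_*)$ is discrete in $B_1$, but $v_*$-invariance makes it contain a translated segment unless empty, so $\Q_*$ is smooth, and a smooth $0$-homogeneous map is constant.)

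Finally, strong $H^1_{\loc}$-convergence to the constant $\CC_0$ together with $\tilde\va_i^{-2}f(\wt\Q_i)\to0$ in $L^1_{\loc}$ yields $E_{\tilde\va_i}(\wt\Q_i,B_2)\to0$, hence $E_{\tilde\va_i}(\wt\Q_i,B_1)\to0$ while $\tilde\va_i\to0^+$. Applying Lemma \ref{partialregularity1} with $\delta=\delta_0$, and Lemma \ref{partialregularity2} with the parameter $\eta$ fixed small enough that the constant there satisfies $C(\eta)<1$, in both cases at the centre $0$ with radius $\tfrac12$, we obtain for all large $i$ that $\dist(\wt\Q_i,\cN)<\delta_0$ on $B_{1/2}$ and $\tfrac14\|e_{\tilde\va_i}(\wt\Q_i)\|_{L^\ift(B_{1/2})}\le C(\eta)<1$, whence $r(\wt\Q_i,0)\ge\tfrac12$. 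This contradicts the failure of the conclusion for every $i$, and the lemma follows.
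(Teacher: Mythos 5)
Your proposal is correct and follows essentially the same route as the paper: negate the statement, blow up at scale $r_i$ with $\tilde\va_i=\va_i/r_i\to0$, use Proposition \ref{H1convergence} to get a limiting Dirichlet minimizer, extract homogeneity from the pinching via Proposition \ref{Monotone} and translation invariance from the directional hypothesis, conclude the limit is constant, and contradict via Lemmas \ref{partialregularity1} and \ref{partialregularity2}. The only (harmless) deviation is the rigidity step, which you settle by the explicit cylindrical-coordinate computation showing the angular profile must be constant by $H^1$-integrability, whereas the paper reduces to a smooth two-dimensional minimizer—your parenthetical alternative is exactly the paper's argument; note also that your claim $E_{\tilde\va_i}(\wt\Q_i,B_2)\to0$ is not justified (constancy is only established on $B_1$), but only the $B_1$ version is actually used.
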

\begin{proof}
Assume that the result is not true. We have a sequence of local minimizers $ \{\Q_{\va_i}\} $, $ \{r_i\}\subset\R_+ $, and $ \{x_i\}\subset\R^3 $ such that $
\wt{\va}_i:=\f{\va_i}{r_i}<i^{-1} $,
\be
\Theta_{r_i}^{\phi}(\Q_{\va_i},x_i)-\Theta_{\f{r_i}{2}}^{\phi}(\Q_{\va_i},x_i)<i^{-1},\label{assum1}
\ee
and
\be
\inf_{v\in\Ss^2}\f{1}{r_i}\int_{B_{r_i}(x_i)}|v\cdot\na\Q_{\va_i}|^2<i^{-1},\label{assum2}
\ee
but $ r(\Q_{\va_i},x_i)<\f{r_i}{2} $ or $ \dist(\Q_{\va_i},\cN)\geq\delta>0 $ in $ B_{\f{r_i}{2}}(x_i) $. Let $ \wt{\Q}_{\wt{\va}_i}(y):=\Q_{\va_i}(x_i+r_iy) $. Up to a subsequence,
$$
\wt{\Q}_{\wt{\va}_i}\to\wt{\Q}_0\in\cN\text{ strongly in }H_{\loc}^1(\R^d,\Ss_0),
$$
where $ \wt{\Q}_0 $ is a local minimizer of \eqref{Dirichletenergy}. Using \eqref{assum2}, we see that $ \wt{\Q}_0 $ is invariant in one direction and it degenerates to a two-dimensional minimizer, which is smooth. \eqref{assum1}, on the other hand, together with \eqref{Monotone} implies that $ \wt{\Q}_0 $ is homogeneous. Combining all above, $ \wt{\Q}_0 $ must be a constant. The contradiction follows from Lemma \ref{partialregularity1} and \ref{partialregularity2}. 
\end{proof}

Furthermore, the condition \eqref{pincheta1} in Lemma \ref{regularscalelem} is somehow unnecessary. We conclude it in the following lemma.

\begin{lem}\label{kplus1}
Let $ \va\in(0,1) $, $ \delta,M,r>0 $, and $ x\in B_2 $. Assume that $ \Q_{\va}\in H^1(B_{40},\Ss_0) $ is a local minimizer of \eqref{LdG}, satisfying 
$$
E_{\va}(\Q_{\va},B_{40})+\|\Q_{\va}\|_{L^{\ift}(B_{40})}\leq M.
$$
There exist $ \eta,\Lda>0 $, depending only on $ a,b,c,\delta $, and $ M $ such that if
\be
\inf_{v\in\Ss^2}\f{1}{r}\int_{B_r(x)}|v\cdot\na u|^2<\eta,\label{detaVsmall}
\ee
then the following property holds. For any $ y\in B_{\f{r}{2}}(x) $ with $ r\in(\Lda\va,1) $, there is $ r_y\in[\eta^{\f{1}{2}}r,1] $ such that $ \dist(\Q_{\va},\cN)<\delta $ in $ B_{\f{r_y}{2}}(y) $ and $ r(\Q_{\va},y)\geq\f{r_y}{2} $.
\end{lem}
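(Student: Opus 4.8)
\emph{Approach.} The plan is to deduce Lemma~\ref{kplus1} from Lemma~\ref{regularscalelem}, trading the two-scale pinching hypothesis \eqref{pincheta1} for a pigeonhole argument across dyadic scales that is driven by the monotonicity formula together with the finiteness of the energy. I would first record two consequences of Proposition~\ref{Monotone}: since $\phi'\le 0$, $\phi\ge 0$ and $f\ge 0$, the integrand in \eqref{Monotone1} is non-negative, so $\rho\mapsto\Theta_\rho^{\phi}(\Q_\va,z)\ge 0$ is non-decreasing for every center $z$; and, using $1\le\phi\le 60$ on the relevant ranges, $\Theta_\rho^{\phi}(\Q_\va,z)$ both controls and is controlled by the localized normalized Landau--de Gennes energy at scales differing from $\rho$ by a bounded factor. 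Combined with $E_\va(\Q_\va,B_{40})\le M$, these give, for all $z\in B_3$ and all $\rho\le\f12$,
\[
\Theta_\rho^{\phi}(\Q_\va,z)\le\Gamma\qquad\text{and}\qquad\rho^{-1}E_\va(\Q_\va,B_{10\rho}(z))+\|\Q_\va\|_{L^{\ift}(B_{10\rho}(z))}\le M',
\]
with $\Gamma,M'$ depending only on $M$. Thus the ambient hypotheses of Lemma~\ref{regularscalelem} hold, with $M$ replaced by $M'$, at every center $y\in B_{5/2}$ and every scale $\le\f12$ (local minimality is inherited by subdomains); I then fix the corresponding constants $\eta_0,\Lda_0$ from that lemma, which depend only on $a,b,c,\delta,M$.

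\emph{Choice of parameters and pigeonhole.} I would next pick $\eta=\eta(a,b,c,\delta,M)>0$ so small that $\eta^{1/2}<\eta_0$ and $\f12\log_2(1/\eta)>\Gamma/\eta_0+4$, and set $\Lda:=\Lda_0\,\eta^{-1/2}$. Fix $y\in B_{\f r2}(x)$ with $r\in(\Lda\va,1)$ and pick $v_0\in\Ss^2$ with $\f1r\int_{B_r(x)}|v_0\cdot\na\Q_\va|^2<\eta$, available from \eqref{detaVsmall}. Consider the dyadic scales $\rho_j:=2^{-j-1}r$, $j=0,\dots,J$, with $J$ the largest integer for which $\rho_J\ge\eta^{1/2}r$; then $J\ge\f12\log_2(1/\eta)-2>\Gamma/\eta_0$, each $\rho_j$ lies in $(\Lda_0\va,\f12)$ because $\rho_j\le\rho_0=r/2<\f12$ and $\rho_j\ge\rho_J\ge\eta^{1/2}r>\eta^{1/2}\Lda\va=\Lda_0\va$, and $B_{\rho_j}(y)\subset B_{\rho_0}(y)=B_{r/2}(y)\subset B_r(x)\subset B_{40}$. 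Since $\rho\mapsto\Theta_\rho^{\phi}(\Q_\va,y)$ is non-decreasing and bounded by $\Gamma$, the telescoping sum
\[
\sum_{j=1}^{J}\Big(\Theta_{\rho_{j-1}}^{\phi}(\Q_\va,y)-\Theta_{\rho_{j}}^{\phi}(\Q_\va,y)\Big)=\Theta_{\rho_0}^{\phi}(\Q_\va,y)-\Theta_{\rho_J}^{\phi}(\Q_\va,y)\le\Gamma
\]
forces, by pigeonhole, some $j^*\in\{1,\dots,J\}$ with $\Theta_{\rho_{j^*-1}}^{\phi}(\Q_\va,y)-\Theta_{\rho_{j^*}}^{\phi}(\Q_\va,y)\le\Gamma/J<\eta_0$; note $\rho_{j^*}=\rho_{j^*-1}/2$.

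\emph{Applying Lemma~\ref{regularscalelem}.} Finally I would put $r_y:=\rho_{j^*-1}$, so that $r_y\in[2\eta^{1/2}r,\,r/2]\subset[\eta^{1/2}r,1]$ and $r_y/2=\rho_{j^*}$, and verify the hypotheses of Lemma~\ref{regularscalelem} at center $y$ and scale $r_y$: (i) the ambient bound holds with $M'$ by the first step; (ii) $r_y\in(\Lda_0\va,1)$ since $r_y\ge\eta^{1/2}r>\eta^{1/2}\Lda\va=\Lda_0\va$ and $r_y\le r/2<1$; (iii) the pinching $\Theta_{r_y}^{\phi}(\Q_\va,y)-\Theta_{r_y/2}^{\phi}(\Q_\va,y)<\eta_0$ is exactly the pigeonhole output; and (iv), since $B_{r_y}(y)\subset B_r(x)$ and $r_y\ge\eta^{1/2}r$,
\[
\inf_{v\in\Ss^2}\f1{r_y}\int_{B_{r_y}(y)}|v\cdot\na\Q_\va|^2\le\f1{r_y}\int_{B_r(x)}|v_0\cdot\na\Q_\va|^2<\f{r}{r_y}\,\eta\le\eta^{1/2}<\eta_0 .
\]
Lemma~\ref{regularscalelem} then yields $\dist(\Q_\va,\cN)<\delta$ in $B_{r_y/2}(y)$ and $r(\Q_\va,y)\ge r_y/2$ with $r_y\in[\eta^{1/2}r,1]$, which is the claim.

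\emph{Main obstacle.} I expect the only genuinely delicate point to be the first step: one must upgrade the single global bound $E_\va(\Q_\va,B_{40})\le M$ into local bounds that are scale-invariant and uniform both in the center $y$ and over the full range of scales down to $\eta^{1/2}r$, so that the constants $\eta_0,\Lda_0$ produced by Lemma~\ref{regularscalelem} do not secretly depend on the (possibly very small) scale $r_y$; this is precisely what the monotonicity of $\Theta_\rho^{\phi}$ delivers, and it is also what forces the calibration $\Lda\sim\eta^{-1/2}$ so that $r_y\ge\eta^{1/2}r$ still dominates $\Lda_0\va$. Everything else is the standard monotonicity-plus-pigeonhole mechanism.
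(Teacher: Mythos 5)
Your proposal is correct and follows essentially the same route as the paper: a dyadic pigeonhole argument based on the monotonicity of $\Theta^{\phi}_{\rho}$ to find a scale $r_y\in[\eta^{1/2}r,\,r/2]$ with small two-scale pinching, transfer of the directional-derivative smallness from scale $r$ to $r_y$ at the cost of a factor $r/r_y\le\eta^{-1/2}$, and then an application of Lemma~\ref{regularscalelem}. The only difference is that you make explicit the calibration $\Lda=\Lda_0\eta^{-1/2}$ and the scale-invariant local energy bounds, which the paper leaves implicit in ``for appropriate $(\eta,\Lda)$''.
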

\begin{proof}
Given Proposition \ref{Monotone}, we can apply a dyadic decomposition of the radius $ r $ to get
\be
\Theta_{r_y}^{\phi}(\Q_{\va},y)-\Theta_{\f{r_y}{2}}^{\phi}(\Q_{\va},y)<\f{C(a,b,c,M)}{\log(\eta^{-1})},\label{pinchsmall}
\ee
for some $ r_y\in[\eta^{\f{1}{2}}r,\f{r}{2}] $. It follows from \eqref{detaVsmall} that
$$
\inf_{v\in\Ss^2}\f{1}{r_y}\int_{B_{r_y}(y)}|v\cdot\na\Q_{\va}|^2\leq\inf_{v\in\Ss^2}\f{1}{r}\int_{B_{r}(x)}|v\cdot\na\Q_{\va}|^2\leq\eta^{\f{1}{2}}.
$$
This, together with Lemma \ref{regularscalelem} and \eqref{pinchsmall}, implies that for appropriate 
$$
(\eta,\Lda)=(\eta,\Lda)(a,b,c,\delta,M)>0,
$$
the desired properties hold when $r\geq\Lda\va$.
\end{proof}

The result below is the key observation in our paper. 

\begin{prop}\label{Fprop}
Let $ \beta\in(0,\f{1}{2}) $, $ \va\in(0,1) $, $ \delta,M,r>0 $, and $ x\in\R^3 $. Assume that $ \Q_{\va}\in H^1(B_{20r}(x),\Ss_0) $ is a local minimizer of \eqref{LdG}, satisfying 
$$
r^{-1}E_{\va}(\Q_{\va},B_{20r}(x))+\|\Q_{\va}\|_{L^{\ift}(B_{20r}(x))}\leq M.
$$
There exist $ \eta,\eta',\Lda>0 $ depending only on $ a,b,\beta,c,\delta $, and $ M $ such that if there exists $ y\in B_{2r}(x) $ with
\be
\Theta_r^{\phi}(\Q_{\va},y)-\Theta_{\f{r}{2}}^{\phi}(\Q_{\va},y)<\eta\label{pincheta}
\ee
and $ r\in(\Lda\va,1) $, then
$$
\Bad(\Q_{\va};\eta'r,\delta)\cap B_r(x)\subset B_{2\beta r}(y).
$$
\end{prop}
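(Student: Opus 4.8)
The plan is to argue by contradiction using a blow-up, exploiting the monotonicity formula \eqref{Monotone1} to force homogeneity and invariance of the limit. Suppose the statement fails. Then for every choice of $\eta,\eta',\Lda$ (say running along sequences $\eta_i,\eta_i',\Lda_i$ with $\eta_i,\eta_i'\to 0$ and $\Lda_i\to +\ift$) there are local minimizers $\Q_{\va_i}$ on $B_{20r_i}(x_i)$ with the stated energy and $L^\ift$ bounds, a point $y_i\in B_{2r_i}(x_i)$ with
$$
\Theta_{r_i}^{\phi}(\Q_{\va_i},y_i)-\Theta_{r_i/2}^{\phi}(\Q_{\va_i},y_i)<\eta_i,
$$
and $r_i\in(\Lda_i\va_i,1)$, yet there exists $z_i\in \Bad(\Q_{\va_i};\eta_i' r_i,\delta)\cap B_{r_i}(x_i)$ with $z_i\notin B_{2\beta r_i}(y_i)$. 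Rescale: set $\wt{\Q}_i(w):=\Q_{\va_i}(y_i+r_i w)$, which is a local minimizer of \eqref{LdG} with parameter $\wt{\va}_i:=\va_i/r_i\to 0^+$ on $B_{20}$, with uniformly bounded energy and $L^\ift$ norm. By Proposition \ref{H1convergence} (applied on this fixed ball), after passing to a subsequence $\wt{\Q}_i\to\wt{\Q}_0$ strongly in $H^1_{\loc}(B_{20},\Ss_0)$ and $\wt{\va}_i^{-2}f(\wt{\Q}_i)\to 0$ in $L^1_{\loc}$, with $\wt{\Q}_0\in H^1_{\loc}(B_{20},\cN)$ a local minimizer of \eqref{Dirichletenergy}.

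The pinching hypothesis, together with $H^1$-strong convergence and the monotonicity formula \eqref{Monotone1} (whose right-hand side is nonnegative in $\rho$ and whose integrand's $\phi'$-part controls the radial derivative), passes to the limit: $\Theta_1^{\phi}(\wt{\Q}_0,0)=\Theta_{1/2}^{\phi}(\wt{\Q}_0,0)$, so in the closed annulus the radial derivative $\frac{w}{|w|}\cdot\na\wt{\Q}_0$ vanishes, i.e. $\wt{\Q}_0$ is $0$-homogeneous on $B_1\setminus\{0\}$ — a minimizing tangent map. Now I want to rule out the existence of a bad point $z_0:=\lim z_i$ (a subsequential limit, which lies in $\ol{B_1}\setminus B_{2\beta}(0)$, so in particular $z_0\neq 0$). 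The two defining alternatives for $z_i\in\Bad$ are treated separately. If $\dist(\Q_{\va_i},\cN)(z_i)>\delta$ for infinitely many $i$: by uniform convergence of $\wt{\Q}_i\to\wt{\Q}_0$ on compact subsets of $B_1\setminus\{0\}$ away from the (finite) singular set of the minimizing tangent map — which follows from the $\va$-regularity Lemmas \ref{partialregularity1}, \ref{partialregularity2} combined with Schoen–Uhlenbeck partial regularity for the limit — one gets $\dist(\wt{\Q}_0,\cN)(z_0)\geq\delta$, contradicting $\wt{\Q}_0\in\cN$-valued. If instead $r(\Q_{\va_i},z_i)<\eta_i' r_i$, i.e. $r(\wt{\Q}_i,\tfrac{z_i-y_i}{r_i})<\eta_i'\to 0$: since a $0$-homogeneous minimizing map into $\cN$ (a real projective plane target, as in harmonic map theory) has isolated singularities away from the origin only at the origin, and is smooth on $B_1\setminus\{0\}$, one has $r(\wt{\Q}_0,z_0)>0$; the scale $r(\cdot,\cdot)$ is lower-semicontinuous under the strong $H^1$ plus local uniform convergence guaranteed by Lemma \ref{partialregularity2}, so $\liminf_i r(\wt{\Q}_i,\cdot)\geq r(\wt{\Q}_0,z_0)/2>0$, contradicting $\eta_i'\to 0$.

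There is one gap to fill: the diverging-$\Lda_i$ regime must actually produce a \emph{constant} or \emph{smooth tangent map near $z_0$}; the homogeneity alone does not immediately give smoothness at $z_0\neq 0$, but standard harmonic-map theory (minimizing tangent maps from $\R^3$ into $\cN\cong \R P^2$ are either constant or have a single singularity at the origin) does. So the limit $\wt{\Q}_0$ is smooth in a neighborhood of $z_0$, which is exactly what makes both contradictions go through, and one then chooses $(\eta,\eta',\Lda)$ by the usual compactness/diagonal argument. I expect \textbf{the main obstacle} to be the passage to the limit of the regular-scale function $r(\Q_{\va},\cdot)$: one must show that smallness of $r(\wt{\Q}_i,\cdot)$ at points converging to a \emph{smooth} point $z_0$ of the limit is impossible, which requires combining the $\va$-regularity estimate of Lemma \ref{partialregularity2} (giving $r^2\|e_{\va}(\Q_{\va})\|_{L^\ift}\lesssim C(\eta)\to 0$ once the scale-invariant energy is small) with the strong $H^1$ convergence to transfer energy smallness at a definite scale around $z_i$ from the limit to the sequence — the condition $r_i>\Lda_i\va_i$ is precisely what is needed so that Lemma \ref{partialregularity2} applies after rescaling. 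The biaxiality alternative ($\dist>\delta$) is handled in parallel via Lemma \ref{partialregularity1}. Finally, one notes that $\beta$ only enters through requiring $z_0\notin B_{2\beta}(0)$, ensuring $z_0$ is bounded away from the one possible singular point of $\wt{\Q}_0$; this is where the dependence of $\eta,\eta',\Lda$ on $\beta$ comes from.
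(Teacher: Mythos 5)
Your argument is essentially correct in outline, but it takes a genuinely different route from the paper. The paper's proof is short and quantitative: it never blows up inside Proposition \ref{Fprop}. Instead, the pinching \eqref{pincheta} at $y$ plus the weighted monotonicity (Proposition \ref{Monotone}) give the integral bound \eqref{leqCr3} on $\int|(\zeta-y)\cdot\na\Q_{\va}|^2$; for any $z\in B_r(x)\setminus B_{2\beta r}(y)$ one has $|z-y|\geq 2\beta r$, so on a small ball $B_{\sg r}(z)$ the fixed direction $\f{z-y}{|z-y|}$ is almost radial and the scale-invariant directional energy $\inf_v\f1{\sg r}\int_{B_{\sg r}(z)}|v\cdot\na\Q_{\va}|^2$ is small; then Lemma \ref{kplus1} (a dyadic pigeonhole on scales producing an automatic small pinching, combined with Lemma \ref{regularscalelem}) yields $r(\Q_{\va},z)\geq\eta' r$ and $\dist(\Q_{\va}(z),\cN)<\delta$, i.e.\ $z\notin\Bad(\Q_{\va};\eta'r,\delta)$, uniformly for all such $z$. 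Your proof instead runs a single compactness/contradiction blow-up at scale $r$ around $y$, uses homogeneity of the limit, the classification fact that a $0$-homogeneous minimizer into $\cN$ from a three-dimensional ball is smooth away from the origin, and transfers smoothness back to the sequence via Lemmas \ref{partialregularity1}--\ref{partialregularity2}; this is legitimate (the compactness in the paper is simply localized earlier, in Lemma \ref{regularscalelem}), but it produces $\eta,\eta',\Lda$ non-constructively and needs extra inputs (tangent-map classification, uniform convergence away from the singular set) that the paper's reduction to Lemma \ref{kplus1} avoids; the paper's route also gives the explicit $\eta'\sim\eta^{1/2}$-type dependence.

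Two bookkeeping points in your write-up need repair. First, $z_i\in B_{r_i}(x_i)$ and $y_i\in B_{2r_i}(x_i)$ only give $|z_i-y_i|<3r_i$, so the rescaled bad points accumulate in $\ol{B}_3\setminus B_{2\beta}$, not $\ol{B}_1\setminus B_{2\beta}$ as you claim. Second, your statement that the radial derivative vanishes ``in the closed annulus, i.e.\ $\wt\Q_0$ is $0$-homogeneous on $B_1\setminus\{0\}$'' is not right as written: vanishing of the radial derivative on the annulus alone gives homogeneity only there; it is the weight $\phi$ of Definition \ref{defnofphi} (with $\phi'\leq-1$ on $[0,8]$) that upgrades the pinching between the scales $\f r2$ and $r$ to vanishing of the radial derivative on a full ball, of radius roughly $2\sqrt2$ after rescaling. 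Since $|z_0|$ can be as large as $3$, you must check that the homogeneity region actually reaches a neighborhood of $z_0$; with the constants of Definition \ref{defnofphi} this requires a small adjustment (enlarging the interval where $\phi'\leq-1$, or pinching between two more widely separated scales). The paper's own display \eqref{leqCr3} on $B_{4r}(y)$ has the same marginal overreach, so this is a fixable constants issue rather than a fatal gap, but in your argument it is load-bearing: without homogeneity up to radius $3$ the limit could have a genuine singularity at $z_0$ and neither contradiction would go through.
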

\begin{proof}
If $ z\in B_r(x)\backslash B_{2\beta r}(y) $, we choose $ \sg=\sg(\beta)>0 $ such that 
$$
B_{\sg r}(z)\subset B_{4r}(y)\cap(B_r(x)\backslash B_{2\beta r}(y)).
$$
Given Proposition \ref{Monotone} and \eqref{pincheta}, we have
\be
\int_{B_{4r}(y)}|(\zeta-y)\cdot\na\Q_{\va}|^2\ud\zeta\leq C\eta r^3.\label{leqCr3}
\ee
By the choice of $ z $, we note $ |z-y|\geq 2\beta r $. As a result,
\begin{align*}
\int_{B_{\sg r}(z)}\left|\f{z-y}{|z-y|}\cdot\na\Q_{\va}\right|^2&\leq \f{C}{r^2}\(\int_{B_{\sg r}(z)}|(\zeta-y)\cdot\na\Q_{\va}|^2\ud\zeta+\int_{B_{\sg r}(z)}|(\zeta-z)\cdot\na\Q_{\va}|^2\ud\zeta\)\\
&\stackrel{\eqref{leqCr3}}{\leq} C\eta r+2\sg^2r\(\f{1}{r}\int_{B_{\sg r}(z)}|\na\Q_{\va}|^2\)\\
&\leq C(\beta,M)(\eta+2\sg^2)r,
\end{align*}
where for the last inequality, we have used Proposition \ref{Monotone} again. Choosing sufficiently small 
$$
(\eta,\Lda^{-1},\sg)=(\eta,\Lda^{-1},\sg)(a,b,\beta,c,\delta,M)>0,
$$
we apply Lemma \ref{kplus1} to deduce that if $ r\in(\Lda\va,1) $, then there is $ \eta'=\eta'(a,b,\beta,c,\delta,M)>0 $ such that
$$
r(\Q_{\va},z)\geq\eta'r\quad\text{and}\quad\dist(\Q_{\va}(z),\cN)<\delta,
$$
completing the proof.
\end{proof}

\section{Proof of main theorem}\label{pf}

\subsection{Covering lemmas} In this subsection, we establish some covering lemmas based on results in previous sections. With the help of these properties, we prove Theorem \ref{main}.

\begin{lem}\label{cover1}
Let $ \delta,M>0 $, $ \va\in(0,1) $, $ 0<r<R\leq 1 $, and $ x_0\in B_2 $. Assume that $ \Q_{\va}\in H^1(B_{40},\Ss_0) $ is a local minimizer of \eqref{LdG}, satisfying 
$$
E_{\va}(\Q_{\va},B_{40})+\|\Q_{\va}\|_{L^{\ift}(B_{40})}\leq M.
$$
There exist $ \eta,\Lda>0 $ depending only on $ a,b,c,\delta $, and $ M $ such that if $ r\in(\Lda\va,1) $, the following properties hold. There is $ B_{2r_x}(x)\subset B_{2R}(x_0) $ with $ r_x\geq r $ such that
$$
\Bad(\Q_{\va};\eta r,\delta)\cap B_R(x_0)\subset B_{r_x}(x).
$$
Moreover, either $ r_x=r $ or
$$
\sup_{y\in B_{2r_x}(x)}\Theta_{\f{r_x}{20}}^{\phi}(\Q_{\va},y)\leq\sup_{y\in B_{2R}(x_0)}\Theta_{R}^{\phi}(\Q_{\va},y)-\eta.
$$
\end{lem}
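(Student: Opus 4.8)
The plan is to run a stopping-time iteration of Proposition~\ref{Fprop} down the dyadic scales $\rho_k:=2^{-k}R$ issuing from $(x_0,R)$. At each scale we face a dichotomy: \emph{either} the normalized density $\Theta_{\rho}^{\phi}$ has already dropped by the prescribed amount $\eta$ somewhere over $B_{2\rho_k}(x^{(k)})$ --- in which case we stop, outputting that ball (the second alternative) --- \emph{or} it has not, and then the monotonicity formula (Proposition~\ref{Monotone}) promotes ``no $\eta$-drop'' to a pinching inequality of the form \eqref{pincheta} at scale $\rho_k$ at some $z_k\in B_{2\rho_k}(x^{(k)})$, so Proposition~\ref{Fprop} confines $\Bad$ to a ball of a quarter the radius; we then recenter at a genuine bad point and pass to scale $\rho_k/2$. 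The recursion runs while the radius stays $\ge r$; if it reaches that floor, the final ball has radius $r$, which is the first alternative.

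Here is how I would carry this out. First, a single use of Proposition~\ref{Monotone} (pushing the radius up to a fixed scale and using $E_{\va}(\Q_{\va},B_{40})\le M$) shows $\rho^{-1}E_{\va}(\Q_{\va},B_{20\rho}(y))\le C(M)$ for all $y\in B_{2R}(x_0)$ and $0<\rho\le R$, so that Proposition~\ref{Fprop} can be applied uniformly with $C(M)$ in place of $M$ and its constants, hence the lemma's $\eta,\Lda$, depend only on $a,b,c,\delta,M$. Fix $\beta=\tfrac18$ (so that after recentering the radius exactly halves), let $\eta_0,\eta',\Lda_0$ be the constants of Proposition~\ref{Fprop}, set $\eta\le\min\{\eta_0,\eta'\}$, $\Lda\ge\Lda_0$, and $\Theta_*:=\sup_{y\in B_{2R}(x_0)}\Theta_{R}^{\phi}(\Q_{\va},y)$; if $\Bad(\Q_{\va};\eta'r,\delta)\cap B_R(x_0)=\varnothing$ take $(r_x,x)=(r,x_0)$. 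The iteration maintains the invariant that at step $k$ (with $\rho_k\ge r$) there is a center $x^{(k)}\in B_R(x_0)$ with $B_{2\rho_k}(x^{(k)})\subset B_{2R}(x_0)$ and $\Bad(\Q_{\va};\eta'\rho_k,\delta)\cap B_R(x_0)\subset B_{\rho_k}(x^{(k)})$ (start $x^{(0)}=x_0$). If $\sup_{z\in B_{2\rho_k}(x^{(k)})}\Theta_{\rho_k/20}^{\phi}(\Q_{\va},z)\le\Theta_*-\eta$ we stop with $(r_x,x)=(\rho_k,x^{(k)})$ --- the containment follows from the monotonicity of $s\mapsto\Bad(\Q_{\va};s,\delta)$ and the displayed bound is the density-drop alternative. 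Otherwise some $z_k\in B_{2\rho_k}(x^{(k)})\subset B_{2R}(x_0)$ has $\Theta_{\rho_k/20}^{\phi}(\Q_{\va},z_k)>\Theta_*-\eta\ge\Theta_{R}^{\phi}(\Q_{\va},z_k)-\eta$, so radial monotonicity of $\Theta_{\rho}^{\phi}$ gives $\Theta_{\rho_k}^{\phi}(\Q_{\va},z_k)-\Theta_{\rho_k/2}^{\phi}(\Q_{\va},z_k)<\eta\le\eta_0$; since $\rho_k\ge r>\Lda\va$, Proposition~\ref{Fprop} yields $\Bad(\Q_{\va};\eta'\rho_k,\delta)\cap B_{\rho_k}(x^{(k)})\subset B_{\rho_k/4}(z_k)$, hence, with the invariant, $\Bad(\Q_{\va};\eta'\rho_k,\delta)\cap B_R(x_0)\subset B_{\rho_k/4}(z_k)$; picking $x^{(k+1)}$ in this (nonempty) set gives $\Bad(\Q_{\va};\eta'\rho_k,\delta)\cap B_R(x_0)\subset B_{\rho_k/2}(x^{(k+1)})=B_{\rho_{k+1}}(x^{(k+1)})$, which (using $\Bad(\Q_{\va};\eta'\rho_{k+1},\delta)\subset\Bad(\Q_{\va};\eta'\rho_k,\delta)$, $x^{(k+1)}\in B_R(x_0)$ and $\rho_{k+1}\le R/2$) restores the invariant. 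If no step stops, the recursion halts at the first $K$ with $\rho_{K+1}<r$, and the step-$K$ construction gives $\Bad(\Q_{\va};\eta r,\delta)\cap B_R(x_0)\subset B_{\rho_{K+1}}(x^{(K+1)})\subset B_r(x^{(K+1)})$, realizing the first alternative with $(r_x,x)=(r,x^{(K+1)})$.

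The routine parts are this monotonicity bookkeeping and the verification that all balls involved lie inside $B_{40}$, so that Propositions~\ref{Monotone}, \ref{Fprop} and the partial regularity lemmas apply with constants depending only on $a,b,c,\delta,M$. The step I expect to be the main obstacle is the geometric control keeping the recentered balls inside $B_{2R}(x_0)$: the pinching point $z_k$ supplied along the way only lies in $B_{2\rho_k}(x^{(k)})$, not in $B_{\rho_k}(x^{(k)})$, so it is essential to recenter at each step at an actual point of $\Bad(\Q_{\va};\eta'\rho_k,\delta)\cap B_R(x_0)$ and to exploit $\rho_k\le R/2$ for $k\ge1$ (the choice $\beta=\tfrac18$ is forced by this, and the generous factor $\tfrac1{20}$ in the density of the conclusion leaves room for the scale comparisons used above). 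The borderline regime $2r>R$, where the iteration is essentially empty, should be dealt with by a separate and easier argument.
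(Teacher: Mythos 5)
Your argument is essentially the paper's proof: the same stopping-time dyadic iteration in which ``no $\eta$-drop'' of the density is upgraded, via the monotonicity formula, to the pinching hypothesis \eqref{pincheta} of Proposition \ref{Fprop}, which confines $\Bad$ to a much smaller ball, followed by recentering and halving the scale until either a drop occurs or the scale reaches $r$. The differences are cosmetic --- the paper recenters at the high-density point itself (with $\beta$ of order $\tfrac1{20}$) rather than at a genuine bad point, and it is no more explicit than you are about the containment $B_{2r_x}(x)\subset B_{2R}(x_0)$ in the borderline regime $2r>R$ that you defer to a separate argument.
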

\begin{proof}
Up to a translation, let $ x_0=0$. For $ x\in B_R $ and $ 0<\rho<R $, we define
$$
F_{\eta}(\Q_{\va};x,\rho):=\left\{y\in B_{2\rho}(x):\Theta_{\f{\rho}{20}}^{\phi}(\Q_{\va},y)>E-\eta\right\},
$$
where
$$
E:=\sup_{y\in B_{2R}}\Theta_{R}^{\phi}(\Q_{\va},y).
$$
There is $ \ell\in\Z_+ $ such that $ 2^{-\ell}R\leq r<2^{-\ell+1}R $. Note that if $ F_{\eta}(\Q_{\va};0,R)=\emptyset $, then the ball $ B_R $ is as desired, so we assume that $ F_{\eta}(\Q_{\va};0,R)\neq\emptyset $. For $ i=1 $, by Proposition \ref{Fprop}, for sufficiently small
$$
(\eta,\Lda^{-1})=(\eta,\Lda^{-1})(a,b,c,\delta,M,\rho)>0,
$$
there is $ x_1\in F_{\eta}(\Q_{\va};0,R) $ such that
$$
\Bad(\Q_{\va};\eta r,\delta)\cap B_R\subset B_{\f{R}{10}}(x_1)\cap B_R.
$$
Choose $ r_1=\f{R}{2} $. If $ F_{\eta}(\Q_{\va};x_1,r_1)=\emptyset $, then the ball $ B_{r_1}(x_1) $ is what we need. On the other hand, we proceed to obtain $ B_{r_2}(x_2) $ by using Proposition \ref{Fprop} again such that $ r_2=\f{R}{2^2} $ and 
$$
\Bad(\Q_{\va};\eta r,\delta)\cap B_R\subset B_{\f{1}{2}\cdot\f{R}{10}}(x_2)\cap B_R.
$$
Repeating the procedure, we will either stop at step $ i\in\Z\cap[1,\ell-1] $ or obtain a ball $ B_{r_{\ell-1}}(x_{\ell-1}) $ such that $ r_{\ell-1}=\f{R}{2^{\ell-1}} $ and
$$
\Bad(\Q_{\va};\eta r,\delta)\cap B_R\subset B_{\f{1}{2^{\ell-2}}\cdot\f{R}{10}}(x_{\ell-1})\cap B_R.
$$
For this case, we let $ r_{\ell}=r $ and $ x_{\ell}=x_{\ell-1} $. $ B_{r_{\ell}}(x_{\ell}) $ satisfies all the required properties.
\end{proof}

\begin{rem}\label{remcover}
By further covering arguments, under the assumption of Lemma \ref{cover1}, there is a collection of balls $ \{B_{r_y}(y)\}_{y\in\cD} $ with $ \inf_{y\in\cD}r_y\geq r $,
$$
\Bad(\Q_{\va};\eta r,\delta)\cap B_R(x_0)\subset\bigcup_{y\in r_y}B_{r_y}(y)
$$
such that either $ r_y=r $ or
\be
\sup_{\zeta\in B_{2r_y}(y)}\Theta_{r_y}^{\phi}(\Q_{\va},\zeta)\leq\sup_{z\in B_{2R}(x_0)}\Theta_{R}^{\phi}(\Q_{\va},z)-\eta.\label{energydrop}
\ee
Moreover, $ \#\cD\leq C $, where $C>0$ is an absolute constant. 
\end{rem}

\begin{lem}\label{cover2}
Under the same assumption of Lemma \ref{cover1}, there exist $ \eta,\Lda>0 $ depending only on $ a,b,c,\delta,M $ such that if $ r\in(\Lda\va,1) $, then we have $ \{x_i\}_{i=1}^N\subset B_R(x_0) $, satisfying
$$
\Bad(\Q_{\va};\eta r,\delta)\cap B_R(x_0)\subset\bigcup_{i=1}^NB_{r}(x_i),
$$
where $ N\in\Z_+ $ depends only on $ a,b,c,\delta $, and $ M $. In particular,
\be
\cL^3(B_r(\Bad(\Q_{\va};\eta r,\delta)\cap B_1))\leq Cr^3,\label{Minkowski}
\ee
where $ C>0 $ depends only on $ a,b,c,\delta $, $ M $.
\end{lem}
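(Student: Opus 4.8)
\noindent\textit{Proof sketch.}
The plan is to iterate the single-scale covering of Lemma~\ref{cover1} (in the packaged form of Remark~\ref{remcover}) and to bound the number of iterations by means of the monotonicity formula. To a ball $B_\rho(y)$ with $B_{2\rho}(y)\subset B_{10}$ and $\rho\in[r,1]$ I would attach the localized energy
\[
\cE(B_\rho(y)):=\sup_{\zeta\in B_{2\rho}(y)}\Theta_\rho^{\phi}(\Q_{\va},\zeta).
\]
Since $\phi$ is non-increasing with $\phi(0)=60$ and $\supp\phi\subset[0,10)$, Proposition~\ref{Monotone} gives $\Theta_\rho^{\phi}(\Q_{\va},\zeta)\le\Theta_1^{\phi}(\Q_{\va},\zeta)\le 60\,E_{\va}(\Q_{\va},B_{40})$, so $0\le\cE(B_\rho(y))\le 60M$ for all such balls. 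On the other hand, \eqref{energydrop} says exactly that every ball produced by applying Remark~\ref{remcover} to $B_\rho(y)$ is either a genuine radius-$r$ ball (``terminal'') or a ball $B_{\rho'}(y')$ with $B_{2\rho'}(y')\subset B_{2\rho}(y)$ and $\cE(B_{\rho'}(y'))\le\cE(B_\rho(y))-\eta$; moreover the construction behind Remark~\ref{remcover}, which descends dyadically in scale, yields $\rho'\le\rho/2$ in the non-terminal case, so the radii decay geometrically down any branch.

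I would then prove by induction on the integer $k:=\lceil\cE(B_\rho(y))/\eta\rceil\in\{0,1,\dots,\lceil 60M/\eta\rceil\}$ that $\Bad(\Q_{\va};\eta r,\delta)\cap B_\rho(y)$ can be covered by at most $C_0^{\,k}$ balls of radius $r$, where $C_0$ is the absolute branching bound of Remark~\ref{remcover}. For $k=0$ we have $\cE(B_\rho(y))=0$, hence $e_{\va}(\Q_{\va})\equiv 0$ on $B_{2\rho}(y)$, so $\dist(\Q_{\va}(z),\cN)=0$ and $r(\Q_{\va},z)\ge\rho>\eta r$ for every $z\in B_\rho(y)$; thus $\Bad(\Q_{\va};\eta r,\delta)\cap B_\rho(y)=\emptyset$ and no ball is needed. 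For the inductive step, apply Remark~\ref{remcover} to $B_\rho(y)$: the at most $C_0$ terminal children contribute one radius-$r$ ball each, and for every non-terminal child $B_{\rho'}(y')$ one has $\lceil\cE(B_{\rho'}(y'))/\eta\rceil\le k-1$ and $B_{2\rho'}(y')\subset B_{10}$, so the inductive hypothesis covers $\Bad(\Q_{\va};\eta r,\delta)\cap B_{\rho'}(y')$, hence its subset $\Bad(\Q_{\va};\eta r,\delta)\cap B_\rho(y)\cap B_{\rho'}(y')$, by at most $C_0^{\,k-1}$ balls; altogether at most $C_0\cdot C_0^{\,k-1}=C_0^{\,k}$. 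Applying this with $B_\rho(y)=B_R(x_0)$ (for which $B_{2R}(x_0)\subset B_{10}$ and $k\le\lceil 60M/\eta\rceil$) yields the covering with $N:=C_0^{\lceil 60M/\eta\rceil}$, which depends only on $a,b,c,\delta$, and $M$ since $\eta=\eta(a,b,c,\delta,M)$. Taking $\Lda$ to be the constant furnished by Lemma~\ref{cover1} makes every invocation legitimate once $r\in(\Lda\va,1)$; discarding the balls disjoint from $B_R(x_0)$ and recentering each remaining one at a bad point it contains (with a harmless fixed dilation of $r$, absorbed into $N$) places the centers $x_i$ in $B_R(x_0)$.

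The estimate \eqref{Minkowski} is then immediate: applying the first part with $R=1$ and $x_0=0$ gives $\Bad(\Q_{\va};\eta r,\delta)\cap B_1\subset\bigcup_{i=1}^{N}B_r(x_i)$, whence $B_r\big(\Bad(\Q_{\va};\eta r,\delta)\cap B_1\big)\subset\bigcup_{i=1}^{N}B_{2r}(x_i)$ and
\[
\cL^3\Big(B_r\big(\Bad(\Q_{\va};\eta r,\delta)\cap B_1\big)\Big)\le N\cdot\tfrac{4}{3}\pi(2r)^3=Cr^3,\qquad C=C(a,b,c,\delta,M).
\]

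The substantive analysis is entirely in Remark~\ref{remcover} and Proposition~\ref{Fprop}; what remains is the bookkeeping, which I expect to be the main obstacle---namely, identifying $\cE$ as the quantity that simultaneously drops by a fixed amount at each non-terminal step and stays uniformly bounded (the bound via Proposition~\ref{Monotone}, which forces one to keep the ball-centers generated down a branch inside a controlled region of $B_{40}$, and this is where the geometric decay of the radii enters), and checking that the scale and energy hypotheses of Lemma~\ref{cover1} and Remark~\ref{remcover} persist at every node, possibly after enlarging $M$ by a fixed factor.
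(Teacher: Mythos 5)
Your proposal is correct and follows essentially the same route as the paper: iterate the covering of Remark \ref{remcover}, note that each non-terminal ball forces the drop \eqref{energydrop} of the quantity $\sup_{\zeta}\Theta^{\phi}$, which is uniformly bounded (by $\phi(0)E_{\va}(\Q_{\va},B_{40})\le 60M$ via Proposition \ref{Monotone}), so the iteration terminates after at most $\lceil 60M/\eta\rceil$ generations with at most $C_0^{\lceil 60M/\eta\rceil}$ radius-$r$ balls, and \eqref{Minkowski} follows by enlarging these balls to radius $2r$. Your bookkeeping (induction on $\lceil\cE/\eta\rceil$, containment $B_{2\rho'}(y')\subset B_{2\rho}(y)$, recentering inside $B_R(x_0)$) is in fact more explicit than the paper's three-line argument, and the points you flag as remaining checks (hypotheses of Lemma \ref{cover1} at each node, centers drifting from $B_2$ to $B_4$) are harmless constant adjustments.
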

\begin{proof}
We apply Remark \ref{remcover} to $ B_R(x_0) $. For the balls in the covering with radius larger than $r$, we apply results in Remark \ref{remcover} again. One observes an energy drop of $ \eta $ at each step, as shown in the \eqref{energydrop}, so the procedure concludes after a finite number of steps, as Proposition \ref{Monotone} demonstrates, thus completing the proof.
\end{proof}

\subsection{Proof of Theorem \ref{main}}

The strong convergence in $ H_{\loc}^1(\om,\Ss_0) $ is due to Lemma \ref{H1convergence}. Regarding the $ L^p $ convergence, we show that for any $ K\subset\subset\om $,
\be
\sup_{\va\in(0,1)}\|\na\Q_{\va}\|_{L^{3,\ift}(K)}\leq C(a,b,c,K,M),\label{QvaL3ift}
\ee
where $ L^{3,\ift} $ is the Lorentz space and
$$
\|\na\Q_{\va}\|_{L^{3,\ift}(K)}:=\sup_{t>0}[\cL^3(\{x\in K:|\na\Q_{\va}|>t\})]^{\f{1}{3}}.
$$
The convergence property in $ L^p(\om,\Ss_0) $ follows directly from the interpolation $ L^{3,\ift}\subset L^q $ with $ 1<q<3 $ and Sobolev embedding theorem.

Without loss of generality, let $ K=B_{\f{1}{2}} $, $ \om=B_{40} $, and
$$
E_{\va}(\Q_{\va},B_{40})+\|\Q_{\va}\|_{L^{\ift}(B_{40})}\leq M.
$$
By \eqref{Minkowski}, for $ r\in(\Lda\va,1) $,
\be
\cL^3(B_r((\{y:r(\Q_{\va},y)<\eta r\})\cap B_1))\leq C(a,b,c,M)r^3,\label{L3abc}
\ee
where $ (\eta,\Lda)=(\eta,\Lda)(a,b,c,M)>0 $. Assume that $ 0<r\leq\Lda\va $. Lemma \ref{Apriori} implies that for any $ y\in B_1 $,
$$
r|\na\Q_{\va}(y)|\leq C\(\f{r}{\va}+1\)\leq C(\Lda+1).
$$
As a result, $ r(\Q_{\va},\cdot)>c_0r $ in $ B_{\f{3}{4}} $, where $ c_0=c_0(a,b,c,M)>0 $. Choose $ \eta\in(0,c_0) $, we see that for any $ r\in(0,1) $, \eqref{L3abc} holds. Letting $ r=t^{-1} $,  when $ t>0 $, we have
$$
\cL^3(\{y\in B_1:|\na\Q_{\va}(y)|>t\})\leq C(a,b,c,M)t^{-3},
$$
implying \eqref{QvaL3ift}.

It remains to show the second point. We still assume that $ K=B_{\f{1}{2}} $ and $ \om=B_{40} $. Fix $ 0<\nu<1 $. For any $ \va>0 $ with $ \Lda\va<1 $, there is $ n(\va)\in\Z_+ $ such that $ \nu^{n(\va)-1}\in[\Lda\va,\nu^{-1}\Lda\va] $. Then
\be
\begin{aligned}
\int_{B_1}f(\Q_{\va})\ud x&\leq\int_{B_{\Lda\va}(\Bad(\Q_{\va};\eta\Lda\va,\delta)\cap B_1)} f(\Q_{\va})\ud x \\
&\quad\quad+\sum_{j=0}^{n(\va)-2}\int_{\cA_j} f(\Q_{\va})\ud x+\int_{B_1\backslash B_1(\Bad(\Q_{\va};\eta,\delta))}f(\Q_{\va})\ud x
\end{aligned}\label{use}
\ee
where
$$
\cA_j:=B_{\nu^j}(\Bad(\Q_{\va};\eta\nu^j,\delta)\cap B_1)\backslash B_{\nu^{j+1}}(\Bad(\Q_{\va};\eta\nu^{j+1},\delta)\cap B_1).
$$
By \cite[Corollary 2]{NZ13}, we have
$$
f(\Q_{\va})\leq C(a,b,c,M)\va^4\nu^{-4(j+1)}
$$
in $\cA_j$.\footnote{Indeed, \cite[Corollary 2]{NZ13} is not a scaling invariant form and we need to apply a scaling invariant result. For such a modification, see \cite[Lemma 3.3]{WZ24} for a similar setting.} This, together with Lemma \ref{cover2} and \eqref{use} implies that
$$
\int_{B_1}f_{\va}(\Q_{\va})\ud x\leq C\(\va^3+\sum_{j=0}^{n(\va)-2}\va^{4}\nu^{-(j+1)}+\va^4\)\leq C\va^3,
$$
completing the proof.

\subsection{Proof of Corollary \ref{Maincor}} We first prove the following auxiliary lemma on the behavior of convergence of global minimizers $ \{\Q_{\va}\}_{\va\in(0,1)} $ near the boundary $ \pa\om $.

\begin{lem}\label{Auxlemma}
Under the assumption of Corollary \ref{Maincor}, for any $ \delta>0 $, there exists $ \delta'>0 $ depending only on $ a,b,c,\delta,\om $, and $ \Q_b $ such that for any $ y\in\om $ with $ \dist(y,\pa\om)<\delta' $, if $ \va\in(0,\delta') $, then
$$
|\na\Q_{\va}(y)|\leq C\quad\text{and}\quad\dist(\Q_{\va}(y),\cN)<\delta,
$$
where $ C>0 $ depends only on $ a,b,c,\om $, and $ \Q_b $.
\end{lem}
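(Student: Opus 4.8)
The plan is to argue by contradiction and to reduce the statement to the boundary regularity of the limiting energy-minimizing harmonic map together with the convergence of the $\Q_\va$ up to $\pa\om$. First I would record that, since $\Q_b\in C^\ift(\pa\om,\cN)$ and $\om$ is smooth, one has the $\va$-uniform bounds $\sup_{\va\in(0,1)}\big(E_\va(\Q_\va,\om)+\|\Q_\va\|_{L^\ift(\om)}\big)\le C(a,b,c,\om,\Q_b)$: the energy bound follows by comparison with a fixed finite-energy $\cN$-valued extension of $\Q_b$, and the $L^\ift$ bound from the maximum principle applied to \eqref{EL}, exactly as in \cite{MZ10}. In particular we are in the setting of Proposition \ref{H1convergence}, so every sequence $\va_k\to0^+$ has a subsequence, not relabelled, along which $\Q_{\va_k}\to\Q_0$ strongly in $H^1(\om,\Ss_0)$ and $\va_k^{-2}f(\Q_{\va_k})\to0$ in $L^1_{\loc}(\om)$, where $\Q_0\in H^1(\om,\cN)$ is an energy-minimizing harmonic map with $\Q_0=\Q_b$ on $\pa\om$ (traces pass to the limit under strong $H^1$ convergence).

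Next I would invoke the boundary regularity theory for energy-minimizing harmonic maps (Schoen--Uhlenbeck, cf.\ \cite{SU82}): such a map into $\cN$ with $C^\ift$ boundary datum has no singularities on $\pa\om$, so there is a collar $\cU:=B_{r_1}(\pa\om)\cap\om$, with $r_1>0$ depending only on $\om$ and $\Q_b$, on which $\Q_0$ is smooth up to $\pa\om$; moreover the boundary estimates depend only on the energy bound and $\|\Q_b\|_{C^{2,\al}}$, so $\|\na\Q_0\|_{L^\ift(\cU)}\le C_\star=C_\star(\om,\Q_b)$ uniformly over the whole family of energy-minimizers with boundary datum $\Q_b$. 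The last ingredient is the convergence $\Q_{\va_k}\to\Q_0$ in $C^1_{\loc}$ on $\cU\cup\pa\om$: since $\Q_0$ is regular on $\cU$, the $H^1$ convergence upgrades first to uniform convergence and then, by Schauder estimates up to $\pa\om$ for \eqref{EL}, to $C^1$ convergence, in the spirit of \cite[Proposition 4]{MZ10} and \cite[Theorem 2]{NZ13}. The new point, and the step I expect to be the main obstacle, is that these arguments must be carried out in half-balls, which requires a boundary version of the monotonicity formula of Proposition \ref{Monotone} (for $\Q_b\in C^\ift$ the reflection/correction terms are lower order) and the half-ball analogues of the $\va$-regularity Lemmas \ref{partialregularity1}--\ref{partialregularity2}; once these are in place the passage to $C^1$ is a routine elliptic bootstrap.

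Granting this, I close the contradiction. Set $C:=C_\star+1$ and suppose the lemma fails: there are $\delta_0>0$, $\va_k\to0^+$ and $y_k\in\om$ with $\dist(y_k,\pa\om)\to0$ such that, for each $k$, either $|\na\Q_{\va_k}(y_k)|>C_\star+1$ or $\dist(\Q_{\va_k}(y_k),\cN)\ge\delta_0$. Passing to a subsequence as above and writing $y_\infty:=\lim y_k\in\pa\om$, the $C^1_{\loc}$ convergence near $y_\infty$ gives $|\na\Q_{\va_k}(y_k)|\to|\na\Q_0(y_\infty)|\le C_\star$ and $\dist(\Q_{\va_k}(y_k),\cN)\to\dist(\Q_b(y_\infty),\cN)=0$, contradicting both alternatives. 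Hence for each $\delta>0$ the asserted estimates hold on $B_{\delta'}(\pa\om)\cap\om$ for all $\va\in(0,\delta')$, for some $\delta'=\delta'(a,b,c,\delta,\om,\Q_b)>0$, with the $\delta$-independent constant $C$ fixed as above. I emphasize that it is precisely the uniform-in-$\va$ form of the conclusion that forces the compactness argument here, rather than a direct quotation of the convergence results of \cite{MZ10,NZ13}.
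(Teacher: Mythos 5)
Your argument is essentially the paper's: a contradiction/compactness argument combining strong $H^1$ convergence of $\Q_{\va}$ to a minimizer $\Q_0$ of \eqref{globalminimizer2}, Schoen--Uhlenbeck boundary regularity of $\Q_0$ (note the relevant citation is \cite{SU83}, not \cite{SU82}), and uniform convergence with gradient bounds on a boundary collar away from $\sing(\Q_0)$. The half-ball monotonicity and $\va$-regularity machinery you flag as the main obstacle need not be redeveloped (nor is the uniform-over-all-minimizers constant $C_\star$ required, since in the contradiction argument the constants may depend on the subsequential limit): the paper simply quotes \cite[Proposition 6]{MZ10} and \cite[Proposition 3]{NZ13}, which provide exactly the uniform smallness of $\dist(\Q_{\va},\cN)$ and the $L^{\ift}$ gradient bound near the smooth boundary.
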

\begin{proof}
For $ \sg>0 $, define
$$
\om_{\sg}:=\{y\in\om:\dist(y,\pa\om)<\sg\}.
$$
Assume that the result is false. Then, there is a sequence of minimizers $ \Q_{\va_i} $ of \eqref{globalminimizer}, $ \delta_i'\to 0^+ $, $ y_i\in\om $ with $ \dist(y_i,\pa\om)<\delta_i' $, $ \va_i\in(0,\delta_i') $ such that 
$$
\lim_{i\to+\ift}|\na\Q_{\va_i}(y_i)|=+\ift\quad\text{or}\quad\inf_{i\in\Z_+}\dist(\Q_{\va_i}(y_i),\cN)\geq\delta>0.
$$
Up to a subsequence, \cite[Lemma 3]{MZ10} implies that $ \Q_{\va_i}\to\Q_0 $ strongly in $ H^1(\om,\Ss_0) $, where $ \Q_0 $ minimizes \eqref{globalminimizer2}. By \cite[Theorem 2.7]{SU83} (also see \cite[Theorem 2.4.1]{LW08}), there exists $ \delta_0>0 $, depending only on $ a,b,c,\om,\Q_0 $, and $ \Q_b $ such that $ u\in\om_{\delta_0} $, where $ \om_{\delta_0} $. Since $ \sing(\Q_0)\cap\om_{\delta_0}=\emptyset $, it follows from \cite[Proposition 6]{MZ10} and \cite[Proposition 3]{NZ13} that 
$$
\dist(\Q_{\va}(y_i),\cN)\to 0^+\quad\text{ and }\quad \sup_{i\in\Z_+}\|\na\Q_{\va_i}\|_{L^{\ift}(\om_{\delta_0})}<+\ift,
$$
contradicting to the original assumptions.
\end{proof}

\begin{proof}[Proof of Corollary \ref{Maincor}]
The convergence $ \Q_{\va_i}\to\Q_0 $ follows from \cite[Lemma 3]{MZ10}. To prove \eqref{cor1}, we first fix $ p\in(1,+\ift) $. By \cite[Proposition 3]{MZ10}, we have $ \|\Q_{\va}\|\leq C(a,b,c) $ for any $ \va\in(0,1) $. Given $ \va'>0 $, choose $ \delta=\delta(a,b,c,\va',\om)>0 $ such that
\be
\|\Q_{\va}-\Q_0\|_{L^p(\om_{\delta})}<\f{\va'}{2}.\label{QvaQ0small1}
\ee
Applying \eqref{Lpconvergence}, if $ i\in\Z_+ $ is sufficiently large, then
$$
\|\Q_{\va_i}-\Q_0\|_{L^p(\om\backslash\om_{\delta})}<\f{\va'}{2}.
$$
This, together with \eqref{QvaQ0small1}, implies $ \|\Q_{\va}-\Q_0\|_{L^p(\om)}<\va' $ when $ i\in\Z_+ $ is large enough.

For \eqref{cor1}, let $ \sg\in(0,\f{1}{100}\diam(\om)) $ be determined later. By Lemma \ref{Auxlemma}, we find $ \delta'=\delta'(a,b,c,\om,\Q_b,\sg)>0 $ such that for any $ \va\in(0,\delta') $,
$$
\|\na\Q_{\va}\|_{L^{\ift}(\om_{\delta'})}\leq C(a,b,c,\om,\Q_b)\quad\text{and}\quad\|\dist(\Q_{\va},\cN)\|_{L^{\ift}(\om_{\delta'})}\leq\sg.
$$
Cover $ \om_{\sg} $ with balls $ \{B_{2\sg}(x_i)\}_{i=1}^N $ such that $ \{x_i\}\subset\pa\om $, where $ N\leq C(a,b,c,\om,\Q_b,\sg) $. For $ y\in B_{\sg}(x_i) $, it follows from \cite[Corollary 2]{NZ13} that 
\be
0\leq f(\Q_{\va}(y))\leq C(a,b,c,\om,\Q_b,\sg)\va^4,\label{fQva4}
\ee
whenever $ \sg=\sg(a,b,c,\om,\Q_b)>0 $ is sufficiently small. The inequality \eqref{fQva4} and \eqref{fQconvergence} directly imply \eqref{cor2}.
\end{proof}

\section{Sharpness of Theorem \ref{main}}\label{sharpsection}

Assume that $ \Q_{\va}\in H^1(B_1,\Ss_0) $ is a minimizer of \eqref{LdG} with $ \Q=\Q_{\op{b}} $ on $ \pa B_1 $, where
$$
\Q_{\op{b}}:=s_*\(x\otimes x-\f{1}{3}\I\).
$$
By Corollary \ref{Maincor}, there exists $ \va_i\to 0^+ $ such that
$$
\Q_{\va_i}\to\Q_0=s_*\(\n_0\otimes\n_0-\f{1}{3}\I\),
$$
where $ \n_0 $ is a minimizer of 
$$
\inf_{\n=x\text{ on }\pa B_1}\int_{B_1}|\na\n|^2\ud x.
$$
It follows from \cite[Theorem 7.1]{BCL86} that $ \n_0(x)=\f{x}{|x|} $, where $ \Q_0 $ is the so-called hedgehog solution. Note that $ \f{x}{|x|} $ is not smooth, showing that the convergence in \eqref{Lpconvergence} is sharp. Indeed, if $ \Q_{\va}\to\Q_0 $ uniformly in $ B_{\f{1}{2}} $, then $ \Q_0 $ is continuous at $ 0 $, a contradiction. Since $ \pa B_1 $ is smooth and $ \f{x}{|x|} $ has only one singularity $ 0 $, \cite[Proposition 6]{MZ10} implies that $ \Q_{\va_i}\to\Q_0 $ uniformly in $ B_1\backslash B_{\f{1}{2}} $. Regarding the convergence behavior of $ \Q_{\va_i} $, the following property holds.

\begin{prop}\label{propsharp}
For $ \Q_{\va_i} $ given as above, there is $ C>0 $, depending only on $ a,b $, and $ c $ such that for sufficiently large $ i\in\Z_+ $,
$$
\int_{B_{\f{3}{4}}}f(\Q_{\va_i})\ud x\geq \f{\va_i^3}{C}.
$$
\end{prop}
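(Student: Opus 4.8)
The plan is to exhibit, inside $B_{3/4}$ and for all large $i$, a ball of radius comparable to $\va_i$ on which $f(\Q_{\va_i})$ is bounded below by a fixed positive constant; integrating over it gives $\int_{B_{3/4}}f(\Q_{\va_i})\gtrsim\va_i^3$, which together with \eqref{fQconvergence} shows the rate in \eqref{fQconvergence} is sharp. The mechanism is that the defect of $\Q_0$ at the origin prevents $\Q_{\va_i}$ from remaining in a neighborhood of $\cN$ near $0$, and the gradient bound of Lemma \ref{Apriori} then forces this ``melting'' to persist on a whole $\va_i$-ball.

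The main obstacle, which I would tackle first, is to produce $\delta_0=\delta_0(a,b,c)>0$ and, for every large $i$, a point $w_i\in B_{1/4}$ with $\dist(\Q_{\va_i}(w_i),\cN)\geq\delta_0$. This I would obtain by a topological degree argument. Recall $\Q_0=s_*\(\tfrac{x}{|x|}\otimes\tfrac{x}{|x|}-\tfrac13\I\)$, so $\Q_0|_{\pa B_{1/4}}$ is, after rescaling the sphere, the map $\w\mapsto s_*(\w\otimes\w-\tfrac13\I)$; identifying $\cN$ with $\mathbb{RP}^2$ via the $2$-to-$1$ map $\w\mapsto s_*(\w\otimes\w-\tfrac13\I)$, this represents a generator of $\pi_2(\cN)\cong\Z$, hence does not extend to a continuous map $\ol{B_{1/4}}\to\cN$. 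Fix $\delta_0$ small enough that the nearest-point projection $\pi_\cN$ is well defined and smooth on the tube $\{\Q:\dist(\Q,\cN)<\delta_0\}$. Since $0$ is the only singularity of $\Q_0$, \cite[Proposition 6]{MZ10} gives $\Q_{\va_i}\to\Q_0$ uniformly on compact subsets of $B_1\bs\{0\}$, in particular on $\pa B_{1/4}$; so for $i$ large $\|\Q_{\va_i}-\Q_0\|_{L^\ift(\pa B_{1/4})}<\delta_0$, whence the straight-line homotopy from $\Q_{\va_i}|_{\pa B_{1/4}}$ to $\Q_0|_{\pa B_{1/4}}$ lies in the tube, and composing with $\pi_\cN$ shows $\pi_\cN\bigl(\Q_{\va_i}|_{\pa B_{1/4}}\bigr)$ is homotopic in $\cN$ to $\Q_0|_{\pa B_{1/4}}$, hence homotopically nontrivial. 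Now $\Q_{\va_i}\in C^\ift(\ol{B_{1/4}})$ by interior elliptic regularity for \eqref{EL}; if it took values in the tube throughout $B_{1/4}$, then $\pi_\cN\circ\Q_{\va_i}$ would be a continuous extension of $\pi_\cN(\Q_{\va_i}|_{\pa B_{1/4}})$ to $\ol{B_{1/4}}$ — impossible. Hence the claimed $w_i$ exists.

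The remaining steps are routine. First, applying Lemma \ref{Apriori} on $B_{1/4}(w_i)\subset\subset B_1$ (with $\|\Q_{\va_i}\|_{L^\ift}\leq M$, $M=M(a,b,c)$, by \cite[Proposition 3]{MZ10}) gives $\|\na\Q_{\va_i}\|_{L^\ift(B_{1/8}(w_i))}\leq C(a,b,c)\va_i^{-1}$; so with $\rho:=\delta_0/(2C(a,b,c))$ one has $|\Q_{\va_i}(y)-\Q_{\va_i}(w_i)|\leq\delta_0/2$ for all $y\in B_{\rho\va_i}(w_i)$, hence $\dist(\Q_{\va_i}(y),\cN)\geq\delta_0/2$ there, and $B_{\rho\va_i}(w_i)\subset B_{3/4}$ for $i$ large. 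Next, since $f^{-1}(0)=\cN$ and $f$ is continuous, $\mu:=\min\{f(\Q):|\Q|\leq M,\ \dist(\Q,\cN)\geq\delta_0/2\}>0$ depends only on $a,b,c$, so $f(\Q_{\va_i})\geq\mu$ on $B_{\rho\va_i}(w_i)$. Therefore
\begin{equation*}
\int_{B_{3/4}}f(\Q_{\va_i})\,\ud x\ \geq\ \int_{B_{\rho\va_i}(w_i)}f(\Q_{\va_i})\,\ud x\ \geq\ \mu\cdot\tfrac{4\pi}{3}\rho^3\,\va_i^3\ =\ \f{\va_i^3}{C},
\end{equation*}
with $C=C(a,b,c)$, which is the asserted bound. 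The only genuinely delicate point is the topological step of the second paragraph; everything else is a direct application of the gradient estimate and the non-degeneracy of $f$ off $\cN$.
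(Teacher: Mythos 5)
Your proof is correct, and it follows the same three-step skeleton as the paper's argument --- a topological obstruction forces a point where $\Q_{\va_i}$ ``melts'', the gradient bound of Lemma \ref{Apriori} propagates this over a ball of radius $\sim\va_i$, and integration gives the $\va_i^3$ lower bound --- but the obstruction step is implemented differently. The paper works on all of $\ol{B}_1$ with the exact boundary datum: it shows that the sublevel set $\{f<\eta\}$ is contained in the eigenvalue-gap region $\{\lda_1>\lda_2\}$ (Lemma \ref{sharplem1}), composes with Canevari's projection onto $\cN$ there, and contradicts the non-existence of a continuous $\cN$-valued extension of the hedgehog datum (Lemma \ref{sharplem2}, proved by lifting along the covering $\Ss^2\to\cN$ and non-retraction of $\ol{B}_1$ onto $\Ss^2$); this yields directly a point with $f(\Q_{\va_i})>\eta$, and uniform convergence on $B_1\setminus B_{\f12}$ is only used to place that point in $B_{\f12}$. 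You instead read off the topology on the interior sphere $\pa B_{\f14}$: uniform convergence away from the defect (which \cite[Proposition 6]{MZ10} does give on compact subsets of $B_1\setminus\{0\}$, so this is legitimate even though the paper invokes it only on $B_1\setminus B_{\f12}$) makes $\Q_{\va_i}|_{\pa B_{\f14}}$ $\delta_0$-close to $\Q_0|_{\pa B_{\f14}}$, the tubular-neighborhood projection transfers homotopy nontriviality ($\pi_2(\cN)\cong\Z$), and non-extendability to the ball produces a point at distance $\geq\delta_0$ from $\cN$, converted into a lower bound on $f$ by compactness and the $L^\ift$ bound of \cite[Proposition 3]{MZ10} rather than via the structure of $\{f<\eta\}$. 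What each buys: the paper's route avoids any homotopy-class bookkeeping on interior spheres and extracts the quantitative bound on $f$ straight from the eigenvalue analysis of the potential; your route dispenses with Lemma \ref{sharplem1} entirely and does not need the boundary sphere on which the degree is computed to carry exactly $\cN$-valued data, which makes it slightly more robust (e.g.\ under perturbed boundary conditions), at the cost of invoking the uniform interior convergence in the topological step.
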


Using the above result, the estimate \eqref{fQconvergence} is sharp. To show Proposition \ref{propsharp}, we need the following lemmas.

\begin{lem}\label{sharplem1}
There exists $ \eta>0 $, depending only on $ a,b $, and $ c $ such that
$$
T_{\eta}:=\{\Q\in\Ss_0:f(\Q)<\eta\}\subset\{\Q\in\Ss_0:\lda_1(\Q)>\lda_2(\Q)\},
$$
where $ \lda_1(\Q)\geq\lda_2(\Q)\geq\lda_3(\Q) $ are three eigenvalues in order. 
\end{lem}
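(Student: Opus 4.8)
\textbf{Proof proposal for Lemma \ref{sharplem1}.}

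The plan is to argue by contradiction using the explicit structure of the vacuum manifold $\cN = f^{-1}(0)$. Suppose no such $\eta$ exists. Then there is a sequence $\Q_k \in \Ss_0$ with $f(\Q_k) \to 0$ but $\lda_1(\Q_k) = \lda_2(\Q_k)$ for every $k$. The first step is to show that the sublevel sets $\{f < \eta\}$ are uniformly bounded in $\Ss_0$: since $f$ is a quartic with positive leading coefficient in $|\Q|^2$ (the term $\frac{c}{4}(\tr\Q^2)^2 = \frac{c}{4}|\Q|^4$ dominates), $f(\Q) \to +\ift$ as $|\Q| \to \ift$, so $\{f < \eta\} \subset \{|\Q| \le R\}$ for some $R = R(a,b,c)$ once $\eta \le 1$, say. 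Hence the $\Q_k$ are bounded and, after passing to a subsequence, $\Q_k \to \Q_\infty$ with $f(\Q_\infty) = 0$, i.e.\ $\Q_\infty \in \cN$. By continuity of eigenvalues, $\lda_1(\Q_\infty) = \lda_2(\Q_\infty)$.

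The second step is to contradict this using the explicit description of $\cN$. Every $\Q \in \cN$ has the form $\Q = s_*(\n\otimes\n - \frac13\I)$ with $\n \in \Ss^2$ and $s_* = s_*(a,b,c) > 0$. Its eigenvalues are $\frac{2s_*}{3}$ (simple, eigenvector $\n$) and $-\frac{s_*}{3}$ (double). Since $s_* > 0$ we have $\frac{2s_*}{3} > -\frac{s_*}{3}$, so the ordered eigenvalues satisfy $\lda_1 = \frac{2s_*}{3} > -\frac{s_*}{3} = \lda_2 = \lda_3$. In particular $\lda_1(\Q_\infty) > \lda_2(\Q_\infty)$, contradicting the conclusion of the previous paragraph. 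This contradiction proves the existence of $\eta = \eta(a,b,c) > 0$ with $T_\eta \subset \{\lda_1 > \lda_2\}$.

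I expect no serious obstacle here; the only points requiring a line of care are (i) the coercivity estimate $f(\Q) \ge \frac{c}{8}|\Q|^4 - C(a,b,c)$ for $|\Q|$ large, which follows by absorbing the lower-order terms $-\frac{a}{2}|\Q|^2 - \frac{b}{3}\tr\Q^3$ (note $|\tr\Q^3| \le C|\Q|^3$) into the quartic via Young's inequality, giving the needed uniform bound on sublevel sets; and (ii) the continuity of the ordered eigenvalue maps $\Q \mapsto \lda_j(\Q)$ on $\Ss_0$, which is standard (they are $1$-Lipschitz in the operator norm by Weyl's inequality). An alternative non-compactness route would be to compute, for a fixed $\Q_0 \in \cN$, that $f$ together with the spectral gap $\lda_1 - \lda_2$ are both controlled near $\cN$; but the compactness argument above is cleanest and avoids any quantitative expansion of $f$ near the vacuum manifold.
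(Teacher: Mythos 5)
Your proof is correct, but it takes a different route from the paper. The paper argues directly and quantitatively: it parametrizes the degenerate set $\{\lda_1(\Q)=\lda_2(\Q)\}$ by matrices with spectrum $(\lda,\lda,-2\lda)$, $\lda\ge 0$, computes $f$ there as the explicit scalar quartic $g(\lda)=k-3a\lda^2+2b\lda^3+9c\lda^4$ (using the value of $k$ from \cite[Proposition 15]{MZ10}), locates its minimum at $\lda_*=\f{-b+\sqrt{b^2+24ac}}{12c}$, checks $g(\lda_*)>0$, and then any $\eta\in(0,g(\lda_*))$ works; this yields an explicit admissible $\eta(a,b,c)$. You instead run a soft compactness/contradiction argument: coercivity of $f$ (the $\f{c}{4}|\Q|^4$ term dominating, via Young's inequality) bounds sublevel sets, a limit point of a putative bad sequence lies in $\cN=f^{-1}(0)$, continuity of the ordered eigenvalues passes the equality $\lda_1=\lda_2$ to the limit, and the explicit spectrum $\f{2s_*}{3}>-\f{s_*}{3}$ (with $s_*>0$ since $b,c>0$) on $\cN$ gives the contradiction. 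All steps are sound, and the relations $\cN=f^{-1}(0)$ and $s_*>0$ that you invoke are stated in the paper, so nothing is circular. What each approach buys: the paper's computation is quantitative (an explicit $\eta$, which also makes transparent why $\eta$ depends only on $a,b,c$) at the cost of the algebra with $k$ and the implicit restriction to $\lda\ge 0$ forced by the ordering $\lda_2\ge\lda_3$; your argument is shorter and avoids all explicit computation with $k$, but produces no effective value of $\eta$ --- which is immaterial here since the lemma only asserts existence.
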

\begin{proof}
Assume that $ \lda_1(\Q)=\lda_2(\Q)=\lda $. We have $ \lda_3(\Q)=-2\lda $. As a result,
\be
f(\Q)=k-3a\lda^2+2b\lda^3+9c\lda^4:=g(\lda).\label{glda}
\ee
By \cite[Proposition 15]{MZ10},
$$
k=\f{s_*^2}{27}(9a+2bs_*-3cs_*^2).
$$
It follows from simple calculation that $ g(\lda) $ in \eqref{glda} achieves the minimum at 
$$
\lda_*=\f{-b+\sqrt{b^2+24ac}}{12c}.
$$
Moreover, $ g(\lda_*)>0 $. Choosing $ \eta\in(0,g(\lda_*)) $, we obtain the desired property.
\end{proof}

\begin{lem}\label{sharplem2}
There is no continuous map $ \Q:\ol{B}_1\to\cN $ such that 
$$
\Q(x)=s_*\(x\otimes x-\f{1}{3}\I\)\quad\text{on }\pa B_1.
$$
\end{lem}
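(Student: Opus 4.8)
The plan is to reduce the nonexistence statement to a topological obstruction coming from the fact that $\cN$ is homeomorphic to $\R P^2$ and the boundary data represents a nontrivial class. First I would set up the map more concretely: since $\Q\colon\ol B_1\to\cN$ is continuous and $\cN=\{s_*(\n\otimes\n-\frac13\I):\n\in\Ss^2\}$, I can identify $\cN$ with $\R P^2$ via $\pm\n\mapsto s_*(\n\otimes\n-\frac13\I)$. The boundary restriction $\Q|_{\pa B_1}(x)=s_*(x\otimes x-\frac13\I)$ is then exactly the map $\Ss^2\to\R P^2$ sending $x\mapsto[\pm x]$, i.e.\ the canonical double-cover projection $\pi\colon\Ss^2\to\R P^2$.

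Next I would observe that a continuous extension $\Q\colon\ol B_1\to\cN$ would mean $\pi\colon\Ss^2\to\R P^2$ extends continuously over the ball $\ol B_1$, hence $\pi$ is null-homotopic as a map $\Ss^2\to\R P^2$. I would then derive a contradiction by showing $\pi$ is \emph{not} null-homotopic. The cleanest route is via $\pi_2$: since $\pi\colon\Ss^2\to\R P^2$ is a covering map with universal cover $\Ss^2$, the induced map $\pi_*\colon\pi_2(\Ss^2)\to\pi_2(\R P^2)$ is an isomorphism (a covering map induces isomorphisms on $\pi_k$ for $k\ge2$). Therefore $[\pi]\in\pi_2(\R P^2)\cong\Z$ is a generator, in particular nonzero, so $\pi$ cannot be null-homotopic. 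Since extendability over $\ol B_1$ forces null-homotopy of the boundary map, no such continuous $\Q$ exists.

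An alternative, more self-contained argument I could present instead uses lifting: if $\Q$ extended continuously, then because $\ol B_1$ is simply connected, the composition $\ol B_1\xrightarrow{\Q}\cN\cong\R P^2$ would lift through the double cover $\Ss^2\to\R P^2$ to a continuous map $\wt\Q\colon\ol B_1\to\Ss^2$ whose boundary values are $\pm\,\mathrm{id}_{\Ss^2}$; but a self-map of $\Ss^2$ of degree $\pm1$ does not extend over the ball (the identity $\Ss^2\to\Ss^2$ is not null-homotopic, as $\deg$ is a homotopy invariant), giving the contradiction. Either way the key input is the standard fact that $\mathrm{id}_{\Ss^2}$ is not null-homotopic.

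The main obstacle is essentially bookkeeping rather than depth: one must carefully justify that the boundary map $x\mapsto s_*(x\otimes x-\frac13\I)$ is genuinely the nontrivial element (equivalently, that the identification $\cN\cong\R P^2$ sends it to the covering projection and not to a constant), and, in the lifting version, that the simple-connectedness of $\ol B_1$ really does produce a continuous lift with the stated boundary behavior. I expect to handle this by noting the lift of the boundary map $\Ss^2\to\R P^2$ is, up to the deck transformation, $\pm\,\mathrm{id}$, and restriction of a global lift on $\ol B_1$ to $\pa B_1$ must agree with one of these two lifts by uniqueness of lifts on the connected set $\pa B_1$. After that the degree argument closes everything.
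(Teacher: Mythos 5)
Your proposal is correct, and your ``alternative'' lifting argument is precisely the paper's proof: lift $\Q$ through the double cover $\Ss^2\to\cN$ using simple connectedness of $\ol{B}_1$, note the boundary values of the lift are $\pm\id_{\Ss^2}$, and conclude from the fact that a degree-$\pm1$ self-map of $\Ss^2$ does not extend over the ball (equivalently, there is no retraction of $\ol{B}_1$ onto $\Ss^2$). Your primary route via $\pi_2(\R P^2)\cong\Z$ and the covering-map isomorphism on higher homotopy groups is just an equivalent repackaging of the same obstruction, so there is nothing essentially different to compare.
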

\begin{proof}
Let $ p:\Ss^2\to\cN $ be the covering map. Indeed, we have 
$$
p(\n)=s_*\(\n\otimes\n-\f{1}{3}\I\)\quad\text{for }\n\in\Ss^2.
$$
Since $ \ol{B}_1 $ is simply connected, for $ \Q\in C(\ol{B}_1,\cN) $, there exists $ \wt{\Q}\in C(\ol{B}_1,\Ss^2) $ such that $ p\circ\wt{\Q}=\Q $ with $ \wt{\Q}(x)=x $ on $ \pa B_1 $, leading to a contradiction since there is no retraction from $\ol{B}_1$ to $\Ss^2$.
\end{proof}

\begin{proof}[Proof of Proposition \ref{propsharp}]
Fix $ \Q_{\va_i} $. We claim that there is $ y_i\in B_1 $ such that $ f(\Q_{\va_i}(y_i))>\eta $, where $ \eta>0 $ is from Lemma \ref{sharplem1}. It is because if for any $ y\in B_1 $, $ f(\Q_{\va_i})(y)<\eta $, Lemma \ref{sharplem1} implies that for any $ y\in\ol{B}_1 $, $ \lda_1(\Q_{\va_i})>\lda_2(\Q_{\va_i}) $. By \cite[Lemma 12]{Can17}, there is a $ C^1 $ nearest point projection 
$$
\Pi:\{\Q\in\Ss_0:\lda_1(\Q)>\lda_2(\Q)\}\to\cN.
$$
Then, $ \Pi\circ\Q_{\va_i}\in C(\ol{B}_1,\cN) $ and 
$$
\Pi\circ\Q_{\va_i}(x)=s_*\(x\otimes x-\f{1}{3}\I\)\quad\text{for any }x\in\pa B_1,
$$
a contradiction to Lemma \ref{sharplem2}. Recalling that $ \Q_{\va_i}\to\Q_0 $ uniformly in $ B_1\backslash B_{\f{1}{2}} $, for $ i\in\Z_+ $ sufficiently large, we have $ y_i\in B_{\f{1}{2}} $. By Lemma \ref{Apriori}, 
$$
|\na\Q_{\va_i}|\leq\f{C(a,b,c)}{\va_i}\quad\text{in }B_{\f{3}{4}}.
$$
There is $ \delta=\delta(a,b,c)\in(0,\f{1}{4}) $ such that
$$
f(\Q_{\va_i}(y))>\f{\eta}{2}\quad\text{for any }y\in B_{\delta\va_i}(y_i).
$$
When $ \va_i\in(0,\f{1}{2}) $, we have $ B_{\delta\va_i}(y_i)\subset B_{\f{3}{4}} $ and then
$$
\int_{B_{\f{3}{4}}}f(\Q_{\va_i})\geq\int_{B_{\delta\va_i}(y_i)}f(\Q_{\va_i})\geq\f{\w_3\eta\delta\va_i^3}{2},
$$
where $\w_3$ denotes the volume of unit balls in $\mathbb R^3$, then completing the proof.
\end{proof}

\section*{Acknowledgment}

The authors would like to express their gratitude to Professor Zhifei Zhang and Professor Zhiyuan Geng for their valuable advice and insightful comments. Additionally, the authors want to thank Professor Arghir Zarnescu for his constructive suggestions on incorporating results such as Corollary \ref{Maincor} into the discussion of global minimizers The authors are partially supported by the National Key R$\&$D Program of China under Grant 2023YFA1008801 and NSF of China under Grant 12288101.

\end{document}